\documentclass[a4paper]{amsart}
\usepackage[T1]{fontenc}
\usepackage{amsmath,amssymb,amsthm,mathtools}
\usepackage{xcolor,tikz,caption}
\usepackage{hyperref}
\hypersetup{
colorlinks=true,
linkcolor=midori,
citecolor=hosyoku,
urlcolor=midori,
  pdftitle={The Fourier restriction problem for biradial functions},
  pdfauthor={Shinya Kinoshita, Juyoung Lee, Diogo Oliveira e Silva, Shobu Shiraki}}
\newtheorem{theorem}{Theorem}[section]
\newtheorem{lemma}[theorem]{Lemma}
\newtheorem{proposition}[theorem]{Proposition}
\newcommand{\R}{\mathbb R}
\newcommand{\Sp}{\mathbb S}
\newcommand{\dd}{\,\mathrm d}
\newcommand{\rMM}{\mathrm{MM}}
\newcommand{\rMR}{\mathrm{MR}}
\newcommand{\rRM}{\mathrm{RM}}
\newcommand{\rRR}{\mathrm{RR}}

\newcommand{\rR}{\mathrm{R}}

\newcommand{\cC}{\mathcal{C}}
\newcommand{\cE}{\mathcal{E}}

\newcommand{\cH}{\mathcal{H}}

\newcommand{\cT}{\mathcal{T}}

\newcommand{\cS}{\mathcal{S}}

\newcommand{\qstar}{q_\ast}

\usepackage{xcolor}
\usepackage{tikz}
\usetikzlibrary{positioning,intersections, calc, arrows,decorations.markings}
\usepackage{tikz-3dplot}

\definecolor{midori}{RGB}{0, 155, 88}
\definecolor{hosyoku}{RGB}{155,0,67}

\title{The Fourier restriction problem for biradial functions}

\author[Kinoshita]{Shinya Kinoshita}
\address[Shinya Kinoshita]{Graduate School of Mathematics, Nagoya University, Furocho, Chikusa-ku, Nagoya, 464-8602 Aichi, Japan}
\email{kinoshita@math.nagoya-u.ac.jp}

\author[Lee]{Juyoung Lee}
\address[Juyoung Lee]{Department of Mathematics, University of British Columbia, Vancouver, BC, Canada}
\email{juyounglee@math.ubc.ca}

\author[Oliveira e Silva]{Diogo Oliveira e Silva}
\address[Diogo Oliveira e Silva]{Center for Mathematical Analysis, Geometry and Dynamical Systems \& Departamento de Matem\'atica, Instituto Superior T\'ecnico, Universidade de Lisboa, Av. Rovisco Pais 10, 1049-001 Lisbon, Portugal}
\email{diogo.oliveira.e.silva@tecnico.ulisboa.pt}

\author[Shiraki]{Shobu Shiraki}
\address[Shobu Shiraki]{Department of Mathematics, Faculty of Science, University of Zagreb, Bijeni\v{c}ka cesta 30, 10000 Zagreb, Croatia}
\email{shobu.shiraki@math.hr}

\subjclass[2020]{42B10}
\keywords{Fourier restriction, biradial functions, orthogonal group}
\date{}

\begin{document}
\begin{abstract}
We determine the exact strong-type Fourier restriction range for the unit sphere
$\Sp^{d-1}$ on the class of $O(d-k)\times O(k)$-invariant functions, for every
$d\geq4$ and $2\leq k\leq d-2$. This includes the critical estimates beyond the
biradial Stein--Tomas endpoint left open by Mandel and Oliveira e Silva.
The proof combines a weighted $L^2$ estimate for extension from a circular arc
with the full two-dimensional restriction theorem. An appendix gives a direct
 proof of the classically conjectured range for axially symmetric functions on the whole sphere.
\end{abstract}
\maketitle

\section{Introduction}
Symmetry can enlarge the range of Fourier restriction estimates beyond that
permitted for arbitrary functions. For biradial functions, independent rotations
in two complementary subspaces constrain angular concentration without eliminating
it. Mandel and the third author \cite{MO} identified the corresponding necessary
conditions and established the endpoint biradial Stein--Tomas estimate. Whether
those conditions suffice throughout the admissible range remained open.

Write $\R^d=\R^{d-k}\times\R^k$. A function $f$ is \emph{$k$-biradial} if
$f(y,z)=f_0(|y|,|z|)$, or equivalently if it is invariant under the action of the group
$G_k:=O(d-k)\times O(k)$. We consider the Fourier restriction estimate
\begin{equation}\label{e:restriction}
 \|\widehat f\|_{L^{p'}(\Sp^{d-1})}\leq C\|f\|_{L^{q'}(\R^d)}
\end{equation}
for biradial Schwartz functions $f$, where the sphere carries its usual surface
measure and primes denote H\"older conjugates.

\begin{theorem}\label{t:main}
Let $d\geq4$, $2\leq k\leq d-2$, and $m:=\min\{k,d-k\}$. For
$1\leq p,q\leq\infty$, estimate \eqref{e:restriction} holds for every
$k$-biradial Schwartz function if and only if one of the following conditions is satisfied:
\begin{align*}
\mathrm{(i)}\quad&\frac1q<\frac{m-1}{2m},
 &\frac1p+\frac1q&\leq1;\\[2pt]
\mathrm{(ii)}\quad&\frac1q=\frac{m-1}{2m},
 &\frac1p+\frac1q&<1;\\[2pt]
\mathrm{(iii)}\quad&\frac{m-1}{2m}<\frac1q<\frac{d-1}{2d},
 &\frac{d-m}{p}+\frac{d+m}{q}&\leq d-1.
\end{align*}
\end{theorem}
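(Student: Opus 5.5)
We reduce the problem to its dual extension form and parametrize the sphere compatibly with the $G_k$ action. The dual of \eqref{e:restriction} is the extension estimate $\|\widehat{g\sigma}\|_{L^{q}(\R^d)}\lesssim\|g\|_{L^{p}(\Sp^{d-1})}$ for $G_k$-invariant $g$. Such a $g$ is a function $g_0(\theta)$ of the angle $\theta\in[0,\pi/2]$, with $(y,z)=(\cos\theta\,\omega,\sin\theta\,\nu)$ and $\omega\in\Sp^{d-k-1}$, $\nu\in\Sp^{k-1}$. In these coordinates $\dd\sigma=\cos^{d-k-1}\theta\sin^{k-1}\theta\dd\theta\dd\omega\dd\nu$. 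Integrating first over $\omega$ and $\nu$ gives
\[
\widehat{g\sigma}(x',x'')=\int_0^{\pi/2} g_0(\theta)\,\widehat{\sigma}_{d-k}(|x'|\cos\theta)\,\widehat{\sigma}_{k}(|x''|\sin\theta)\cos^{d-k-1}\theta\sin^{k-1}\theta\dd\theta,
\]
with $\widehat\sigma_n(r)\sim r^{-(n-1)/2}\cos(r-c_n)$ for large $r$. Setting $(r,s)=(|x'|,|x''|)$, the $L^q(\R^d)$ norm becomes an $L^q$ norm on the quadrant with weight $r^{d-k-1}s^{k-1}$.

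\textbf{Sufficiency.} We split the quadrant into four regions according to whether $r$ and $s$ are $\lesssim1$ or $\gg1$; this is what the macros $\rMM,\rMR,\rRM,\rRR$ suggest.
\begin{itemize}
\item[(a)] In the region where both $r,s\gg1$, we replace the Bessel factors by their asymptotics. This turns the integral into a sum of two-dimensional extension operators from the circular arc $\{(\cos\theta,\sin\theta)\}$, evaluated at $(\pm r,\pm s)$. The measure carries the density $g_0(\theta)(\cos\theta)^{(d-k-1)/2}(\sin\theta)^{(k-1)/2}$, and the output carries the weight $(rs)^{-(\cdot)}$ times the Jacobian. The weights cancel to $r^{(d-k-1)(1-q/2)}s^{(k-1)(1-q/2)}$. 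Since $q>2$, this is a decaying weight.
\item[(b)] Interior estimates in the range (iii) then come from the full two-dimensional restriction theorem for curves, which holds for $q>4$ with $1/p+3/q\le1$, interpolated against a weighted $L^2$ extension estimate for the arc. The $L^2$ estimate is $\int|\widehat{h\,d\theta}|^2 (1+|\xi|)^{-\alpha}\lesssim\|h\|_2^2$ for $\alpha>1$, together with its endpoint Lorentz variants. The behaviour of the arc near $\theta=0,\pi/2$, where $\cos$ or $\sin$ vanishes, is handled by a dyadic decomposition in $\theta$. A piece at angle $2^{-j}$ interacts with the smaller weight, and summing over $j$ is what produces $m=\min\{k,d-k\}$.
\item[(c)] The regions in which one variable is small are one-dimensional Bessel estimates, handled by Hölder's inequality. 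They give the conditions $1/q<(d-1)/(2d)$ and (i).
\end{itemize}

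\textbf{Necessity.} We test the estimate against three families.
\begin{itemize}
\item $g\equiv1$ gives $q>2d/(d-1)$, i.e.\ $1/q<(d-1)/(2d)$.
\item A bump in $\theta$ of width $\delta$ near $\theta=0$, with the smaller factor of dimension $m$, gives the Knapp-type condition $(d-m)/p+(d+m)/q\le d-1$.
\item Scaling and the constant function give $1/p+1/q\le1$. At the line $1/q=(m-1)/(2m)$, the strict inequality in (ii) follows from a logarithmic divergence, obtained by summing dyadic Knapp examples.
\end{itemize}

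\textbf{Main obstacle.} The hardest step is the critical line (iii) with equality. Interpolation there loses an $\epsilon$, so we will need a Bourgain-type summation trick or a restricted weak-type argument combined with real interpolation between dyadic $\theta$-pieces. This is presumably where the endpoint Lorentz weighted $L^2$ bound from (b) does the essential work.
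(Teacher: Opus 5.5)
Your proposal leaves the new content of the theorem unproved. Necessity, cases (i) and (ii), and case (iii) for $q\geq q_m$ are already in \cite{MO}. What is new is exactly the critical segment $1/q_m<1/q<(d-1)/(2d)$ on $(d-m)/p+(d+m)/q=d-1$. There you concede that your interpolation loses an $\epsilon$, and you defer to an unspecified Bourgain-type or restricted weak-type argument without saying what the dyadic pieces or the two competing bounds would be. Interpolating estimates from strictly inside the region never reaches this line, so some estimate on it must be proved directly, and your plan supplies none.

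The loss is built into your $L^2$ input. After the Bessel reduction the target output weight is the product weight $r_1^{-(d-k-1)/Q}r_2^{-(k-1)/Q}$. Interpolating with the circle theorem near the diagonal endpoint therefore needs an $L^2$ bound whose output weight dominates $r_1^{-a_1}r_2^{-a_2}$, with $a_1=(d-k-1)/(\theta Q)$, $a_2=(k-1)/(\theta Q)$ and $\theta\approx(d-2)/d$. The isotropic weight $(1+|\xi|)^{-\alpha}$, $\alpha>1$, cannot do this. It is bounded near the coordinate axes, where the product weight is singular, and along them it decays like $|\xi|^{-\alpha}$, faster than $r_1^{-2a_1}$ since $2a_1<1$. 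Nor does it see the degeneracy of the phase $r_1\cos\phi$ at $\phi=0$, which is where the extremal band lives. Your step (a) has a related flaw: $r,s\gg1$ does not justify Bessel asymptotics, because $s\sin\theta\lesssim1$ whenever $\theta\lesssim1/s$, and that is exactly the degenerate zone.

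The missing idea is the anisotropic weighted estimate
\[
 \|\cE_\Gamma F\,r_1^{-a_1}r_2^{-a_2}\|_{L^2(\R_+^2)}\lesssim\|F\phi^{-b}\|_{L^2(0,\frac\pi4)},\qquad b=1-2a_1-a_2,
\]
valid for $0<a_1,a_2<1/2<a_1+a_2<1$. Because the weight is a product, the quadratic form has kernel $K_1K_2$, a product of one-dimensional oscillatory integrals. The bound $|\cos\phi-\cos\psi|\gtrsim(\phi\psi)^{1/2}|\phi-\psi|$, together with the input weight $\phi^{-b}$, turns this into a Stein--Weiss kernel $(\phi\psi)^{-\alpha}|\phi-\psi|^{-\lambda}$ with $2\alpha+\lambda=1$.

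Interpolation with change of weights between this estimate and the circle theorem ($1/s+3/t=1$, $t>4$) lands exactly on the critical line, in strong type, at points arbitrarily close to the diagonal endpoint (where $a_1+a_2\downarrow1/2$). The resulting input weight $\phi^{-\theta b}$ is dominated by the required $\phi^{-(k-1)/P}$ because $\theta b-(k-1)/P=(d-m-k)(1/P-1/Q)\leq0$. This comparison, not a dyadic summation in $\phi$, is where the minimum in $m=\min\{k,d-k\}$ is used.

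The Bessel factors are split globally as $t^{1/2}J_\nu(t)=M_\nu(t)+R_\nu(t)$ with $|R_\nu(t)|\lesssim(1+t)^{-1}$. So $\rMM,\rMR,\rRM,\rRR$ label main and remainder parts of the two Bessel factors, not regions. The cross and remainder terms need $1/q>(d-m-1)/(2(d-m))$, which is one reason the direct argument is run only near the diagonal endpoint. Interpolation with the biradial Stein--Tomas bound $L^2\to L^{q_m}$ of \cite{MO} then fills the rest of the segment.

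Finally, in your necessity sketch, $1/p+1/q\leq1$ comes neither from scaling (the sphere has none) nor from $g\equiv1$, which gives only $1/q<(d-1)/(2d)$. It comes from the same band example in the regime $1/q<(m-1)/(2m)$, and a single band already gives the logarithmic divergence on $1/q=(m-1)/(2m)$.
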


Set $q_m:=2(d+m)/(d+m-2)$. The new estimates cover the critical segment
\begin{equation}\label{e:new-segment}
 \frac1{q_m}<\frac1q<\frac{d-1}{2d},\qquad
 \frac{d-m}{p}+\frac{d+m}{q}=d-1.
\end{equation}
The necessary conditions are from \cite[Theorem~1.3]{MO}; cases \textup{(i)},
\textup{(ii)}, and the part of \textup{(iii)} with $q\geq q_m$ were established
in \cite[Corollary~1.1]{MO}. The rest follows from \eqref{e:new-segment} by
H\"older's inequality on the sphere in the dual formulation; see Figure~\ref{f:biradial}.

For general functions, the conjectured range is
\[
 \frac1q<\frac{d-1}{2d},\qquad
 \frac{d-1}{p}+\frac{d+1}{q}\leq d-1.
\]
Thus biradial symmetry enlarges the optimal range of exponents. This complements
Muscalu--Oliveira \cite[Theorem~1.5]{MO24}, who established the linear restriction
conjecture for the paraboloid under a full tensor-product hypothesis on the extension
data. 
The cases $k\in\{1,d-1\}$ retain the
classical necessary conditions. In Appendix~\ref{s:appendix}, we prove their
sufficiency for the larger class of axially symmetric functions, without a polar
support assumption.

Related optimal estimates were obtained by Mandel for weighted extension from the
circle \cite{Mandel25} and for the composition of restriction and extension on
biradial functions \cite{Mandel26}. We refer to \cite[Section~1.3]{MO} for earlier
work on symmetry and restriction.

\begin{figure}[t]
\centering

\begin{tikzpicture}[scale=0.8em]
    \def\x{0.15}
    \def\y{0.8}
    \def\z{0.1}
    
    \coordinate (rest) at (1/2+\x,1/2+\x);
    \coordinate (new) at (\y,1-\y);

    \fill[opacity=0.5, color=midori] (1,0)--(1,1)--(1/2+\x,1)--(rest)--(new)--(1,0);

    \draw (0,0)--(0,1)--(1,1)--(1,0)--(0,0);
    \draw [opacity=0.2](1/2,0)--(1/2,1);
    \draw [opacity=0.2, name path=half] (0,1/2)--(1,1/2);
    \draw [opacity=0.2] (0,0)--(1,1);
    \draw [opacity=0.2] (1,0)--(0,1);

    \draw[dashed, hosyoku] (rest)--(1,0);
    \draw [name path=main] (rest)--(new)--(1,0);
    
    \draw [dotted,hosyoku,thick] (1/2+\x,0)--(1/2+\x,1); 

    \path[name intersections={of=half and main, by=st}];

    \fill [white] (rest) circle (0.2pt);
    \draw [black] (rest) circle (0.2pt);
    \fill [white] (new) circle (0.2pt);
    \draw [black] (new) circle (0.2pt);
    \fill [black] (st) circle (0.2pt);
    \draw [black] (st) circle (0.2pt);

    \node [below left] at (0,0) {$O$};
    \node [left] at (0,1/2) {$\frac12$};
    \node [left] at (0,1) {$1$};

    \node [right] at (1,0) {$\frac{1}{q'}$};
    \node [above] at (0,1) {$\frac{1}{p'}$};

    \node [below] at (1/2+\x,0) {$\frac{d+1}{2d}$};

    \draw (-0.01,1-\y)--(0.01,1-\y);
    \node [left] at ({0,1} |- new) {$\frac{m-1}{2m}$};

    \draw (-0.01,1-\y)--(0.01,1-\y);


\end{tikzpicture}
\captionof{figure}{
The shaded region illustrates the sharp range for the $k$-biradial restriction problem. Its critical boundary lies between the conjectured critical line for the classical restriction problem for general data (dashed) and the radial threshold (dotted). The black point denotes the biradial Stein--Tomas point, while the open circles indicate the excluded endpoints.
}\label{f:biradial}
\end{figure}

As in \cite{MO}, the proof reduces biradial spherical extension to a weighted
problem on a circular arc. The key is an angular input weight that compensates
for the degeneracy of the cosine phase in a weighted $L^2$ estimate. Interpolation
with the optimal two-dimensional restriction theorem gives critical estimates near
the diagonal endpoint, and interpolation with the biradial Stein--Tomas estimate
then fills \eqref{e:new-segment}. Section~\ref{s:reductions} sets up the reduction,
and Sections~\ref{s:MM}--\ref{s:RM} treat the four terms in the Bessel expansion.

\subsection*{Notation}
We write $\R_+=(0,\infty)$ and
$\widehat f(\xi)=\int_{\R^d}e^{-ix\cdot\xi}f(x)\dd x$.
The notation $A\lesssim B$ means $A\leq CB$, with $C$ depending only on $d,k$
and the fixed exponents; $A\sim B$ means both $A\lesssim B$ and $B\lesssim A$.
We use $L^{p,r}$ for Lorentz spaces.

\section{Reductions}\label{s:reductions}
Let
\[
\cE g(x):=\int_{\Sp^{d-1}}e^{ix\cdot\omega}g(\omega)\dd\sigma(\omega)
\]
be the extension operator. Averaging over $G_k$ is contractive on every Lebesgue
space under consideration and commutes with restriction and extension. Thus the
usual duality argument applies within the invariant subspaces. To prove the new
estimates in Theorem~\ref{t:main}, it suffices to establish
\begin{equation}\label{e:extension}
 \|\cE g\|_{L^q(\R^d)}\lesssim\|g\|_{L^p(\Sp^{d-1})}
\end{equation}
for $k$-biradial data and exponents given by \eqref{e:new-segment}.

\subsection{Two-dimensional weighted extension estimates}
Write
\[
 \omega=(\omega_y\cos\phi,\omega_z\sin\phi),\qquad
 \omega_y\in\Sp^{d-k-1},\quad \omega_z\in\Sp^{k-1},\quad 0<\phi<\pi/2.
\]
The surface measure is
\[
 \dd\sigma(\omega)=(\cos\phi)^{d-k-1}(\sin\phi)^{k-1}
 \dd\phi\dd\sigma(\omega_y)\dd\sigma(\omega_z).
\]
Indeed, the angular direction has unit length, whereas the tangent directions in
the two spherical factors are scaled by $\cos\phi$ and $\sin\phi$.
For biradial $g$, define its profile by
$g(\omega_y\cos\phi,\omega_z\sin\phi)=g_0(\phi)$. Then, for finite $p$,
\[
 \|g\|_{L^p(\Sp^{d-1})}^p\sim
 \int_0^{\frac\pi2}|g_0(\phi)|^p(\cos\phi)^{d-k-1}(\sin\phi)^{k-1}\dd\phi.
\]
For $x=(x_1,x_2)\in\R^{d-k}\times\R^k$, put $r_1=|x_1|$ and $r_2=|x_2|$.
The Fourier--Bessel identity
\begin{equation}\label{e:FourierBessel}
 \int_{\Sp^{n-1}}e^{ir\xi\cdot\omega}\dd\sigma(\omega)
 =
 (2\pi)^{\frac n2}
 (r|\xi|)^{1-\frac n2}J_{\frac n2-1}(r|\xi|)
\end{equation}
applied in both spherical variables gives
\begin{align*}
 \cE g(x_1,x_2)
 &=C_{d,k}r_1^{1-\frac{d-k}{2}}r_2^{1-\frac k2}\\
 &\quad\times\int_0^{\frac\pi2}g_0(\phi)
 J_{\frac{d-k}{2}-1}(r_1\cos\phi)J_{\frac k2-1}(r_2\sin\phi)\\[-2pt]
 &\hspace{20mm}\times(\cos\phi)^{\frac{d-k}{2}}(\sin\phi)^{\frac k2}\dd\phi.
\end{align*}
In particular, $Eg$ is biradial. In the rest of the proof, we identify it with its
profile in $(r_1,r_2)$ when taking radial integrals.

For $p,q>2$, introduce the auxiliary exponents $P,Q$ by
\begin{equation}\label{e:half}
 \frac1P:=\frac12-\frac1p,\qquad \frac1Q:=\frac12-\frac1q.
\end{equation}
The critical relation
\begin{equation}\label{e:critical}
 \frac{d-m}{p}+\frac{d+m}{q}=d-1
\end{equation}
then becomes
\begin{equation}\label{e:critical-aux}
 \frac{d-m}{P}+\frac{d+m}{Q}=1.
\end{equation}
Set
\[
 h(\phi):=g_0(\phi)(\cos\phi)^{\frac{d-k-1}{2}}(\sin\phi)^{\frac{k-1}{2}}
\]
and
\begin{align*}
 \cT h(r_1,r_2):=\int_0^{\frac \pi2}&h(\phi)J_{\frac{d-k}{2}-1}(r_1\cos\phi)
 J_{\frac k2-1}(r_2\sin\phi)\\[-2pt]
 &\hspace{20mm}\times(r_1\cos\phi)^{\frac12}(r_2\sin\phi)^{\frac12}\dd\phi.
\end{align*}
Thus
\[
 \cE g(x_1,x_2)=C_{d,k}r_1^{-\frac{d-k-1}{2}}r_2^{-\frac{k-1}{2}}\cT h(r_1,r_2),
\]
and polar coordinates in the two Euclidean blocks reveal that \eqref{e:extension}
is equivalent to
\begin{equation}\label{e:weighted}
 \big\|\cT h\,r_1^{-\frac{d-k-1}{Q}}r_2^{-\frac{k-1}{Q}}\big\|_{L^q(\R_+^2)}
 \lesssim
 \big\|h(\cos\phi)^{-\frac{d-k-1}{P}}(\sin\phi)^{-\frac{k-1}{P}}\big\|_{L^p(0,\frac\pi2)}.
\end{equation}
Recall  $q_m:=2(d+m)/(d+m-2)$.
We shall prove the following formulation of the new estimates.
\begin{theorem}\label{t:restricted}
Let $(p,q)$ satisfy \eqref{e:critical} and
$\qstar:=2d/(d-1)<q<q_m$. Then \eqref{e:weighted} holds.
\end{theorem}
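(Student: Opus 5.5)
We will prove Theorem~\ref{t:restricted} by splitting $\cT$ according to the Bessel asymptotics and following the outline in the introduction. For each of the two Bessel factors $J_\nu(s)s^{1/2}$ we separate the region $s\lesssim1$ from $s\gtrsim1$. On the first region $J_\nu(s)s^{1/2}$ behaves like $s^{\nu+1/2}$, a monomial. On the second it equals $\sum_\pm e^{\pm is}a_\pm(s)$, with symbols $a_\pm$ of order zero. This produces four pieces $\cT_{\rMM},\cT_{\rMR},\cT_{\rRM},\cT_{\rRR}$ (M for the monomial regime, R for the oscillatory regime), and we prove \eqref{e:weighted} for each of them separately.

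The three pieces involving at least one monomial factor should be handled by positive-kernel arguments. We take absolute values and bound the oscillatory factor trivially. After that, $\cT_{\rMM}$ and the mixed pieces reduce to one-dimensional Hardy-type or Schur-type inequalities with power weights in $r_i$ and in $\cos\phi,\sin\phi$. For the mixed pieces we can alternatively apply a one-dimensional Fourier transform in the oscillatory variable together with Hausdorff--Young and Pitt-type weighted inequalities. In the strict range $\qstar<q<q_m$ the power weights in \eqref{e:weighted} should be integrable, with room to spare, on the relevant regions $r_1\cos\phi\lesssim1$ or $r_2\sin\phi\lesssim1$, once \eqref{e:critical-aux} is used to balance the exponents. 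I expect these terms to yield to routine exponent bookkeeping; possibly they only give restricted weak type, which is then upgraded by interpolation along the line.

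The main term is $\cT_{\rRR}$, which is essentially the two-dimensional extension operator from the arc $\{(\cos\phi,\sin\phi)\}$, with phases $e^{i(\pm r_1\cos\phi\pm r_2\sin\phi)}$, applied to $h$ and tested against the weights $r_1^{-(d-k-1)/Q}r_2^{-(k-1)/Q}$. The plan has three steps.

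\textbf{Step 1: a weighted $L^2$ estimate.} We prove a weighted $L^2$ (that is, $p=q=2$-type) estimate for the arc extension in which the output weight $r_1^{-a}r_2^{-b}$ is compensated by an input weight $(\cos\phi)^{-\alpha}(\sin\phi)^{-\beta}$. The input weight is needed because, near $\phi=0$ or $\phi=\pi/2$, the phase $r_1\cos\phi$ degenerates quadratically. We prove it by $TT^*$ combined with a dyadic decomposition in $\phi$ and in $r_i$. The kernel $\int r_1^{-2a}r_2^{-2b}e^{i r\cdot(\omega-\omega')}$ can be computed as a homogeneous Fourier transform, of size $|\cos\phi-\cos\phi'|^{2a-1}|\sin\phi-\sin\phi'|^{2b-1}$. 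We then close with a Schur test against the angular weights. Equivalently, on each dyadic angular piece where $\cos\phi\sim2^{-j}$ we rescale to a piece of curvature-one arc and sum the pieces by almost orthogonality.

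\textbf{Step 2: interpolation with the two-dimensional theorem.} We interpolate the weighted $L^2$ bound (Stein--Weiss interpolation with change of measure) against the full two-dimensional restriction theorem for the circle, $\|\cE_{\Sp^1}h\|_{L^q(\R^2)}\lesssim\|h\|_{L^p}$ for $q>4$ and $3/q+1/p\leq1$, used near the diagonal point $p=q=4$. This should give \eqref{e:weighted} on a piece of the critical line \eqref{e:critical} near the point $q=\qstar$, possibly as restricted weak type.

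\textbf{Step 3: filling the segment.} We interpolate these estimates with the biradial Stein--Tomas estimate at $q=q_m$ from \cite{MO}. Real interpolation along the critical line then recovers strong type on the open segment $\qstar<q<q_m$.

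The hardest step will be Step 1: choosing the weight exponents $\alpha,\beta$ so that the $L^2$ bound holds and, after interpolation, lands exactly on \eqref{e:critical-aux}. I would first try the choice dictated by scaling, namely matching $\alpha$ with $a$ via the dyadic rescaling, and verify the Schur test for it. The delicate part there is the endpoint $\log$ loss, which must be avoided by staying strictly inside the range.
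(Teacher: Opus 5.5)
Your treatment of the doubly oscillatory piece (your $\cT_{\rRR}$; the paper's labels are reversed, with M denoting the purely exponential part of the Bessel expansion) matches the paper. It uses $TT^*$ with the kernel bound $|\cos\phi-\cos\psi|^{2a_1-1}|\sin\phi-\sin\psi|^{2a_2-1}$, then a weighted HLS or Schur bound via $|\cos\phi-\cos\psi|\gtrsim(\phi\psi)^{1/2}|\phi-\psi|$. Stein--Weiss interpolation with the circle theorem strictly off its diagonal (so that $a_1+a_2>1/2$) follows, and finally interpolation with the biradial Stein--Tomas bound.

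The genuine gap is in the mixed pieces. You claim that, after taking absolute values, they reduce to positive-kernel inequalities with room to spare. This fails. Near $\qstar$ the conditions \eqref{e:radial-range} hold, i.e.\ $(k-1)q/Q<1$ and $(d-k-1)q/Q<1$. So neither $r_1^{-(d-k-1)/Q}$ nor $r_2^{-(k-1)/Q}$ is in $L^q$ near infinity. Since $q_m<2m/(m-1)$, the weight attached to the $m$-dimensional block fails on the whole segment. Take your piece with the small-argument factor in $r_1$ and the oscillatory factor in $r_2$. For $r_1\sim1$ and large $r_2$, the absolute-value majorant is comparable to $\int_{r_2\sin\phi\gtrsim1}|h(\phi)|\dd\phi$, which tends to $\|h\|_{L^1}$. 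Its weighted $L^q$ norm is therefore infinite for every nonzero $h$. The same happens with $r_1$ and $r_2$ exchanged. The oscillation in the non-localized radial variable is indispensable.

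In $r_2$ this is handled by a Pitt-type bound for $\cS$ (Lemma~\ref{l:sine}), and that piece does have room to spare. In $r_1$ it is not routine. The phase $r_1\cos\phi$ is degenerate at $\phi=0$, so the one-dimensional bound for $\cC$ carries an angular weight $\phi^{-\eta}$ reflecting the Jacobian of $u=\cos\phi$ (Lemma~\ref{l:cosine}, itself proved by a weighted $L^2$ estimate plus interpolation). The remaining $r_2$-integration has a kernel depending only on $r_2\phi$. It is exactly dilation invariant when $k=d-m$, where $A=A_\ast$ in the paper's notation; by \eqref{e:substitution} this always occurs on one of the two halves of the arc. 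There, Minkowski followed by H\"older loses a logarithm, because $A_\ast+\frac{k-1}{Q}-\frac1q=-(\frac1{q'}-\frac1p)$. The paper instead keeps the angular integral inside the radial norm and closes with Young's inequality in logarithmic coordinates, using $p>q'$. Your aside about Hausdorff--Young and Pitt points in the right direction, but you do not identify this degenerate, scale-critical term.

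Relatedly, your small-argument cutoff means your main piece is not literally $\cE_\Gamma$. It carries $\chi(r_2\sin\phi)a_\pm(r_2\sin\phi)$, which couples $r_2$ and $\phi$, so neither the exact $TT^*$ kernel nor the circle theorem applies as stated. Removing the cutoff produces further terms of exactly this mixed type. The paper avoids this by writing $t^{1/2}J_\nu=M_\nu+R_\nu$ on all of $\R_+$, with $M_\nu$ purely exponential and $|R_\nu(t)|\lesssim(1+t)^{-1}$.
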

By interpolation with the biradial Stein--Tomas estimate from \cite{MO}, it is
enough to prove \eqref{e:weighted} at critical points arbitrarily close to
$(1/p,1/q)=((d-1)/(2d),(d-1)/(2d))$. In Sections~\ref{s:MM}--\ref{s:RM}, we
therefore work on \eqref{e:critical} with $q>\qstar$ sufficiently close to
$\qstar$. In particular, $2<p<q$ and $0<1/P<1/Q$.

\subsection{Localization and Bessel decomposition}
Split $h$ into its restrictions to $(0,\pi/4)$ and $(\pi/4,\pi/2)$.
The second interval reduces to the first under
\begin{equation}\label{e:substitution}
 \phi\mapsto\frac \pi2-\phi,\qquad k\leftrightarrow d-k,
 \qquad r_1\leftrightarrow r_2.
\end{equation}
For $h$ supported in $(0,\pi/4)$, we have $\cos\phi\sim1$ and $\sin\phi\sim\phi$, and
the desired bound becomes
\[
 \big\|\cT h\,r_1^{-\frac{d-k-1}{Q}}r_2^{-\frac{k-1}{Q}}\big\|_{L^q(\R_+^2)}
 \lesssim\big\|h\phi^{-\frac{k-1}{P}}\big\|_{L^p(0,\frac\pi4)}.
\]
For every fixed $\nu>-1/2$, the standard Bessel asymptotics give
\begin{equation}\label{e:Bessel}
 t^{\frac12}J_\nu(t)=M_\nu(t)+R_\nu(t),\qquad
 M_\nu(t)=c_{\nu,+}e^{it}+c_{\nu,-}e^{-it},
\end{equation}
where the constants depend on $\nu$, and
\begin{equation}\label{e:remainder}
 |R_\nu(t)|\lesssim (1+t)^{-1},\qquad t>0;
\end{equation}
see \cite[Appendices~B.6 and~B.8]{Grafakos}. Expanding the two Bessel factors gives four types of contributions. We denote
representative terms by $\cT_\rMM,\cT_\rMR,\cT_\rRM,\cT_\rRR$. Each main factor is a
sum of two exponentials, and below we write out the positive-sign terms. The
full contributions are finite linear combinations of these sign terms. The same
proofs apply to all signs, since the kernel bounds are unchanged and the
unweighted circle estimates hold on the whole plane.

Unlike the decomposition in \cite{MO}, \eqref{e:Bessel} does not retain a
small-argument cutoff. The resulting separate terms need not preserve the
cancellation of the full Bessel product near $r_1=0$ or $r_2=0$. Accordingly,
our elementary remainder bounds require the local integrability conditions in
\eqref{e:radial-range} below, which hold near the diagonal endpoint. This is
sufficient for the interpolation argument.

\subsection{Two useful lemmas}
We use the following bilinear form of the Stein--Weiss inequality in $\R$; see
\cite[Theorem~B$_2^*$]{SteinWeiss}.
\begin{lemma}[Weighted Hardy--Littlewood--Sobolev inequality]\label{l:wHLS}
Let $1<r,s<\infty$ and $0<\lambda<1$. Suppose
\[
 \alpha<\frac1{r'},\quad \beta<\frac1{s'},\quad
 \alpha+\beta\geq0,\quad \frac1r+\frac1s\geq1,
 \qquad \frac1r+\frac1s+\alpha+\beta+\lambda=2.
\]
Then
\[
 \iint_{\R^2}\frac{|f(x)g(y)|}{|x|^\alpha|y|^\beta|x-y|^\lambda}
 \dd x\dd y\lesssim\|f\|_{L^r(\R)}\|g\|_{L^s(\R)}.
\]
\end{lemma}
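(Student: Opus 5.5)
The statement just before this point is Lemma~\ref{l:wHLS}, the bilinear Stein--Weiss inequality on $\R$, which the paper quotes from \cite[Theorem~B$_2^*$]{SteinWeiss}. Our plan is to reduce it to the classical one-dimensional Hardy--Littlewood--Sobolev inequality together with weighted Hardy-type bounds. First we put the weights on the functions. By symmetry in $x\leftrightarrow y$ we may assume $f,g\ge0$. We split $\R^2$ into three regions: $|y|<|x|/2$, $|x|<|y|/2$, and the region where $|x|\sim|y|$.

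On the comparable region $|x|\sim|y|$ we have $|x|^{-\alpha}|y|^{-\beta}\sim |x|^{-(\alpha+\beta)}$. Since $\alpha+\beta\ge0$ and $|x-y|\lesssim|x|$, this factor is at most $|x-y|^{-(\alpha+\beta)}$. The kernel is then dominated by $|x-y|^{-(\lambda+\alpha+\beta)}$, with $\lambda+\alpha+\beta=2-1/r-1/s$. If $\lambda+\alpha+\beta<1$, unweighted HLS on $\R$ applies, because the exponent matches scaling exactly. In the borderline case $1/r+1/s=1$, where $\lambda+\alpha+\beta=1$, we instead keep the factor $|x|^{-(\alpha+\beta)}$ on dyadic shells $|x|\sim 2^j$. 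On each shell we apply HLS with a localized Young/Hölder bound, then sum using $\ell^r\ell^s\subset\ell^1$ summability. In fact we will use $\alpha+\beta>0$ or $\lambda<1$ to gain a factor, and we expect this step to be the delicate one.

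On the region $|y|<|x|/2$ we have $|x-y|\sim|x|$, so the kernel is $\lesssim |x|^{-\alpha-\lambda}|y|^{-\beta}$. The form becomes
\[
\int f(x)|x|^{-\alpha-\lambda}\Big(\int_{|y|<|x|/2}g(y)|y|^{-\beta}\dd y\Big)\dd x .
\]
The inner integral is bounded by Hölder: the condition $\beta<1/s'$ makes $|y|^{-\beta}\in L^{s'}$ locally, giving the bound $\|g\|_{L^s}|x|^{1/s'-\beta}$. This is, however, a pointwise bound. To recover an $L^r$ pairing we use Hardy's inequality in its weighted form (the Muckenhoupt criterion for the averaging operator). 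This bounds the averaging operator as a map $L^s\to L^{r'}(|x|^{-(\alpha+\lambda)r'}\cdots)$, and the relation $1/r+1/s+\alpha+\beta+\lambda=2$ together with $1/r'\le 1/s$ (that is, $1/r+1/s\ge1$) is exactly the Hardy exponent condition. The region $|x|<|y|/2$ is symmetric and uses $\alpha<1/r'$.

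The main obstacle is the diagonal region in the endpoint case $1/r+1/s=1$. There the naive domination loses summability, and we must use the strict conditions $0<\lambda<1$ via a dyadic decomposition in $|x|$, taking care that the pieces where the dilation invariance is exact still sum.
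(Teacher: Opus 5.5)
Your proof is correct, and it takes a different route from the paper. The paper does not prove the lemma; it quotes the bilinear Stein--Weiss inequality (Theorem~B$_2^*$ of Stein--Weiss) as a black box. You instead give the classical three-region argument.

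\begin{itemize}
\item \emph{Off the diagonal.} The Bradley--Muckenhoupt criterion for the Hardy operator from $L^s$ to $L^{r'}$ needs $s\le r'$, which is $1/r+1/s\ge1$. The two integrability conditions $(\alpha+\lambda)r'>1$ and $\beta s'<1$ are each equivalent to $\beta<1/s'$ under the scaling relation. The Muckenhoupt product scales like $R^{1/r'+1/s'-\alpha-\beta-\lambda}=R^0$.
\item \emph{On the diagonal, $1/r+1/s>1$.} Since $\alpha+\beta\ge0$, the kernel is dominated by $|x-y|^{-(2-1/r-1/s)}$, and HLS applies.
\item \emph{On the diagonal, $1/r+1/s=1$.} The step you call delicate is routine. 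Here $s=r'$ and $\alpha+\beta=1-\lambda>0$. On the shell $|x|\sim2^j$, use Young's inequality with the truncated kernel $|z|^{-\lambda}\mathbf 1_{|z|\lesssim2^j}$, whose $L^1$ norm is $\sim2^{j(1-\lambda)}$ because $\lambda<1$. This bounds the shell's contribution by $2^{-j(\alpha+\beta)}2^{j(1-\lambda)}\|f_j\|_{L^r}\|g_j\|_{L^{r'}}=\|f_j\|_{L^r}\|g_j\|_{L^{r'}}$. No extra gain is needed; H\"older in $j$, together with the bounded overlap of the $y$-shells, closes the sum.
\end{itemize}

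A small correction: $f,g\ge0$ comes for free from the absolute values in the integrand. The $x\leftrightarrow y$ symmetry is what reduces the two off-diagonal regions to one.

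On what each route buys: the citation gives brevity and the general $n$-dimensional statement. Your argument is self-contained and shows where each hypothesis enters: $\alpha<1/r'$ and $\beta<1/s'$ only off the diagonal, and $\alpha+\beta\ge0$ only on it. That is useful here, because the applications in Lemma~\ref{l:weighted-L2} and \eqref{e:cosine-L2} are exactly your borderline case $r=s=2$, $2\alpha+\lambda=1$. In that case even a Schur test with $|x|^{-1/2}$ for the kernel, which is homogeneous of degree $-1$, would suffice. Lemma~\ref{l:sine} only uses unweighted HLS.
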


We also recall weighted interpolation for linear operators; see \cite[Theorem 2]{Stein56}.
\begin{lemma}\label{l:interpolation}
Let $1\leq p_0,p_1,q_0,q_1\leq\infty$, and let $v_0,v_1,w_0,w_1$ be positive,
finite measurable weights on the respective measure spaces. Suppose a linear
operator $T$ is defined on the sum of the two input spaces and satisfies
\[
 \|Tf\,w_j\|_{L^{q_j}}\lesssim\|fv_j\|_{L^{p_j}},\qquad j\in\{0,1\}.
\]
For $0<\theta<1$, put
\[
 \frac1p=\frac{1-\theta}{p_0}+\frac\theta{p_1},\qquad
 \frac1q=\frac{1-\theta}{q_0}+\frac\theta{q_1},\qquad
 v=v_0^{1-\theta}v_1^\theta,\quad w=w_0^{1-\theta}w_1^\theta.
\]
Then $\|Tf\,w\|_{L^q}\lesssim\|fv\|_{L^p}$.
\end{lemma}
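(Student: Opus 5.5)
We prove the lemma by Stein's complex interpolation, that is, by the three lines lemma applied to an analytic family that absorbs the weights. By the substitution $f=u/v$, the claim is equivalent to
\[
 \|w\,T(u/v)\|_{L^q}\lesssim\|u\|_{L^p}.
\]
We argue by duality against $g\in L^{q'}$ with $\|g\|_{q'}=1$. We first treat the case where $p<\infty$ and $u$, $g$ are simple functions. We also assume that $u$ is supported in a set of finite measure where $N^{-1}\le v_0,v_1\le N$, and that $g$ is supported in a set of finite measure where $N^{-1}\le w_0,w_1\le N$. The general case is recovered at the end by monotone convergence in $N$ and density.

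For $z$ in the strip $0\le\operatorname{Re}z\le1$, put
\[
 \frac1{p(z)}=\frac{1-z}{p_0}+\frac z{p_1},\qquad \frac1{q'(z)}=\frac{1-z}{q_0'}+\frac z{q_1'},
\]
and define
\[
 u_z=|u|^{p/p(z)}\operatorname{sgn}u,\qquad g_z=|g|^{q'/q'(z)}\operatorname{sgn}g,
\]
with the usual conventions when some $q_j'=\infty$. Set
\[
 F(z)=\int w_0^{1-z}w_1^{z}\,T\bigl(v_0^{z-1}v_1^{-z}u_z\bigr)\,g_z .
\]
Since $u$ is simple and the weights are bounded above and below on its support, the map $z\mapsto v_0^{z-1}v_1^{-z}u_z$ is analytic and bounded with values in the space $L^{p_0}(v_0^{p_0})\cap L^{p_1}(v_1^{p_1})$. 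This uses dominated convergence for the difference quotients, which is where the truncation at level $N$ is needed. The operator $T$ is bounded, hence continuous and linear, from each of these weighted spaces into the corresponding weighted $L^{q_j}$ space. Pairing with the bounded, compactly supported analytic factor $w_0^{1-z}w_1^zg_z$ then shows that $F$ is analytic in the open strip, continuous, and bounded on the closed strip.

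On the line $\operatorname{Re}z=j$, with $z=j+it$, the factors $v_0^{it}$, $v_1^{it}$, $w_0^{it}$, $w_1^{it}$ are unimodular. We also have $|u_z|=|u|^{p/p_j}$ and $|g_z|=|g|^{q'/q_j'}$. Hölder's inequality and the hypothesis therefore give
\[
 |F(j+it)|\le \bigl\|w_j\,T(v_j^{-1}\tilde u)\bigr\|_{L^{q_j}}\,\|g\|_{q'}^{q'/q_j'}\lesssim \|u\|_{p}^{p/p_j},
\]
where $\tilde u=v_j^{-it}v_{1-j}^{\pm it}u_z$ has $|\tilde u|=|u|^{p/p_j}$, and the implied constants are uniform in $t$. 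The three lines lemma then yields $|F(\theta)|\lesssim\|u\|_p$. Since $u_\theta=u$ and $g_\theta=g$, we have
\[
 F(\theta)=\int w\,T(u/v)\,g .
\]
Taking the supremum over such $g$ gives the estimate for truncated simple $u$.

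Finally, we remove the restrictions. Simple functions supported where the weights are bounded above and below are dense in $L^p$, since the weights are positive and finite almost everywhere and $p<\infty$. Letting $N\to\infty$, the supremum over $g$ extends to all of $L^{q'}$ by monotone convergence, which is legitimate because $q'$ is the dual exponent of $q$. The convergence of $T$ along approximating sequences follows from boundedness on the two endpoint spaces, which gives convergence in measure of $w_jT(\cdot)$ along a subsequence. The case $p=\infty$ forces $p_0=p_1=\infty$ and is handled by the same argument with $u_z=u$.

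The main obstacle is the technical justification that $F$ is analytic and admissible, meaning bounded, for the three lines lemma, given that the weights are merely measurable. This is exactly what the truncation to the sets where $N^{-1}\le v_j,w_j\le N$ is designed to handle.
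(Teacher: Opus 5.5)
Your argument is correct and is essentially the standard proof of the result the paper cites without proof (Stein's Theorem~2): absorb the weights into the analytic family $z\mapsto w_0^{1-z}w_1^{z}\,T(v_0^{z-1}v_1^{-z}u_z)$ and apply the three lines lemma, exactly as you do. Your final limiting step for $p<\infty$ tacitly uses that $\{f: fv\in L^p\}$ embeds continuously into the sum of the two input spaces. This is true by a level-set splitting, and it is what makes $Tf$ defined in the first place, so it is worth stating.

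The one loose end is the degenerate case $p_0=p_1=\infty$, where your own argument does not close. Truncations to finite-measure sets on which the weights are bounded do not approximate $u$ in $L^\infty$, nor in the sum space. The fix is to truncate only the ratio, to $B_N=\{N^{-1}\le v_0/v_1\le N\}$, which already suffices for analyticity. One then notes that $u\chi_{B_N^c}/v\to0$ in the sum space, since $v_0/v<N^{-\theta}$ on $\{v_0/v_1<N^{-1}\}$ and $v_1/v<N^{-(1-\theta)}$ on $\{v_0/v_1>N\}$.
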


All oscillatory kernel calculations below can first be made with exponential
damping. For $0<c<1$, $\epsilon>0$, and $u\ne0$,
\[
 \int_0^\infty e^{-(\epsilon-iu)r}r^{-c}\dd r
 =\Gamma(1-c)(\epsilon-iu)^{c-1},\qquad
 \Big|\int_0^\infty e^{-(\epsilon-iu)r}r^{-c}\dd r\Big|
 \lesssim |u|^{c-1}.
\]
The bound is uniform in $\epsilon$, so the ensuing estimates and Fatou's lemma
justify removal of the damping. We initially take smooth inputs supported away
from the angular endpoints and then pass to the stated spaces by density.

\section{The main--main contribution}\label{s:MM}
Put
\[
 \Gamma:=\{(\cos\phi,\sin\phi):0\leq\phi\leq\frac\pi4\}.
\]
A representative main--main term is
\[
 \cT_\rMM h(r_1,r_2)=\cE_\Gamma h(r_1,r_2)
 :=\int_0^{\frac\pi4}h(\phi)e^{i(r_1\cos\phi+r_2\sin\phi)}\dd\phi.
\]
\begin{proposition}\label{p:MM}
Let $h$ be supported in $(0,\pi/4)$. For $(p,q)$ on \eqref{e:critical} with
$q>\qstar$ sufficiently close to $\qstar$,
\[
 \big\|\cT_\rMM h\,r_1^{-\frac{d-k-1}{Q}}r_2^{-\frac{k-1}{Q}}\big\|_{L^q(\R_+^2)}
 \lesssim \big\|h\phi^{-\frac{k-1}{P}}\big\|_{L^p(0,\frac\pi4)}.
\]
\end{proposition}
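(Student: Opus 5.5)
The plan is to obtain the main--main estimate by interpolating (Lemma~\ref{l:interpolation}) between two endpoint estimates for the arc extension operator $\cE_\Gamma$. The first is the unweighted two-dimensional restriction theorem for the circle (Zygmund/Carleson--Sjölin):
\[
 \|\cE_\Gamma h\|_{L^{q_0}(\R^2)}\lesssim\|h\|_{L^{p_0}(0,\frac\pi4)},\qquad q_0>4,\quad \frac{3}{q_0}\leq 1-\frac{1}{p_0},
\]
used at points $(1/p_0,1/q_0)$ on the critical line $1/p_0+3/q_0=1$ with $q_0$ close to $4$. The second is a weighted $L^2$ estimate,
\[
 \big\|\cE_\Gamma h\, r_1^{-a}r_2^{-b}\big\|_{L^2(\R_+^2)}\lesssim\big\|h\,\phi^{-c}\big\|_{L^2(0,\frac\pi4)},
\]
with exponents $a,b,c$ satisfying the scaling relation $a+b=1/2+\dots$. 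The angular weight $\phi^{-c}$ is there to compensate for the degeneracy of the phase near $\phi=0$, where the derivative of $\cos\phi$ vanishes. The key bookkeeping is to choose $\theta$, $(p_0,q_0)$ and $(a,b,c)$ so that the interpolated exponents are exactly $1/p$, $1/q$ and the interpolated weights are $r_1^{-(d-k-1)/Q}r_2^{-(k-1)/Q}$ and $\phi^{-(k-1)/P}$. Near the diagonal endpoint $1/p\approx1/q\approx(d-1)/(2d)$ this forces $\theta$ small or large in a controlled way, and I would check it by a direct linear-algebra computation using \eqref{e:critical-aux}.

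The weighted $L^2$ estimate would be proved via the $TT^*$ argument. Squaring,
\[
 \|\cE_\Gamma h\,r_1^{-a}r_2^{-b}\|_{L^2}^2=\iint h(\phi)\overline{h(\psi)}\,K(\phi,\psi)\dd\phi\dd\psi,
\]
where
\[
 K(\phi,\psi)=\int_0^\infty\!\!\int_0^\infty e^{ir_1(\cos\phi-\cos\psi)+ir_2(\sin\phi-\sin\psi)}\,r_1^{-2a}r_2^{-2b}\dd r_1\dd r_2 .
\]
With the damping device and $0<2a,2b<1$, this gives
\[
 |K(\phi,\psi)|\lesssim|\cos\phi-\cos\psi|^{2a-1}\,|\sin\phi-\sin\psi|^{2b-1}.
\]
On $(0,\pi/4)$ one has $|\sin\phi-\sin\psi|\sim|\phi-\psi|$ and $|\cos\phi-\cos\psi|\sim|\phi-\psi|(\phi+\psi)$. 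Therefore
\[
 |K|\lesssim|\phi-\psi|^{2a+2b-2}(\phi+\psi)^{2a-1}.
\]
Since $2a-1<0$ and $\phi+\psi\geq\max(\phi,\psi)\ge\phi^{1/2}\psi^{1/2}$, we can bound $(\phi+\psi)^{2a-1}\le \phi^{a-1/2}\psi^{a-1/2}$. Applying Lemma~\ref{l:wHLS} with $r=s=2$, $\lambda=2-2a-2b\in(0,1)$ and weights $\alpha=\beta=1/2-a-c$ then gives the bound by $\|h\phi^{-c}\|_{L^2}^2$. The conditions $\alpha<1/2$, $\alpha+\beta\geq0$, and the balance $1+2\alpha+\lambda=2$, i.e.\ $c=1-a-b/2\cdot2\ldots$, are to be matched. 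Concretely, the balance yields $2\alpha=2a+2b-1$, so $c=1/2-a-\alpha=1-2a-b$, and $\alpha\ge0$ needs $a+b\geq1/2$.

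The main obstacle is the compatibility of exponents. The $L^2$ estimate requires $a,b<1/2$, $a+b\in[1/2,1)$ and $c=1-2a-b$, while the interpolation must reproduce the prescribed weights with $q$ near $q_*$ and a two-dimensional endpoint with $q_0>4$. I would first solve the interpolation equations with $q_0\downarrow4$ and $1/q\downarrow(d-1)/(2d)$, and verify that the resulting $(a,b)$ lie strictly inside the admissible open region. Openness then allows perturbing to nearby critical points, which is all Theorem~\ref{t:restricted} needs via interpolation with the biradial Stein--Tomas estimate. If the symmetric splitting of $(\phi+\psi)^{2a-1}$ does not leave enough room, I would instead distribute the weight asymmetrically on the regions $\phi<\psi/2$, $\psi<\phi/2$ and $\phi\sim\psi$. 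On the off-diagonal regions I would use Hardy-type bounds directly, and on the diagonal region I would use the local HLS with the weights frozen.
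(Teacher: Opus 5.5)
Your architecture is the paper's: Lemma~\ref{l:interpolation} applied to \eqref{e:circle} and a weighted $L^2$ bound proved by $TT^*$, using the damped kernel bound, $(\phi+\psi)^{2a-1}\le(\phi\psi)^{a-\frac12}$, and Lemma~\ref{l:wHLS} with $r=s=2$. Your $c=1-2a-b$ is exactly Lemma~\ref{l:weighted-L2}. The gap is in the step you propose for the exponent bookkeeping, which is the real content of the proposition. Solving the interpolation equations in the limit $q_0\downarrow4$, $1/q\downarrow(d-1)/(2d)$ gives $\theta=(d-2)/d$, $a=\frac{d-k-1}{2(d-2)}$ and $b=\frac{k-1}{2(d-2)}$. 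Hence $a+b=\frac12$ and $\lambda=2-2a-2b=1$. This limit point is not strictly inside the admissible region; it lies on its boundary. Lemma~\ref{l:wHLS} excludes $\lambda=1$; your stated region $a+b\in[\frac12,1)$ contradicts your own requirement $\lambda\in(0,1)$. Worse, the weighted $L^2$ bound is actually false there: $r_1^{-a}r_2^{-b}\geq|x|^{-1/2}$ on $\R_+^2$, and for smooth $F\neq0$ supported inside the arc, $|\cE_\Gamma F|^2\sim|x|^{-1}$ on a sector of $\R_+^2$, so the left side diverges logarithmically. The circle endpoint $q_0=4$ is excluded as well. So the openness-and-perturbation argument gives nothing, and your asymmetric-splitting fallback cannot repair an estimate that fails.

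What makes the argument work is a sign computation along \eqref{e:critical}, not openness. With the circle point on $1/p_0+3/q_0=1$ one has $\theta=1-\frac1P-\frac3Q$. Then $\theta a=\frac{d-k-1}{Q}$, $\theta b=\frac{k-1}{Q}$ and \eqref{e:critical-aux} give
\[
 \theta\bigl(2(a+b)-1\bigr)=(d-m-1)\Bigl(\frac1Q-\frac1P\Bigr),\qquad
 \frac14-\frac1{q_0}=\frac{1}{4(1-\theta)}\Bigl(\frac1Q-\frac1P\Bigr).
\]
Both are strictly positive precisely because $1/P<1/Q$ on \eqref{e:new-segment}. Only $0<\theta<1$, $0<a,b<\frac12$ and $a+b<1$ are genuinely open conditions, and they hold near the limit because $2\leq k\leq d-2$. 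This is what the paper checks, with $\frac1Q-\frac1P=4\epsilon(1-\theta)$.

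A smaller point: once the circle point is on its critical line, nothing is free. The interpolated angular weight is then $\phi^{-\theta c}$ with $\theta c-\frac{k-1}{P}=(d-m-k)\bigl(\frac1P-\frac1Q\bigr)$, which is strictly negative when $k<d/2$. So you cannot match $\phi^{-(k-1)/P}$ exactly as you intend. You need the extra step $\|F\phi^{-\theta c}\|_p\leq\|F\phi^{-(k-1)/P}\|_p$, which follows from $\phi<1$; equivalently, you could use a non-endpoint circle estimate.
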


The main ingredient is a weighted $L^2$ estimate in which the input weight
compensates for the degeneracy of the cosine phase at $\phi=0$.
\begin{lemma}\label{l:weighted-L2}
Let $0<a_1,a_2<1/2$ satisfy $1/2<a_1+a_2<1$, and set $b=1-2a_1-a_2$. Then
\[
 \|\cE_\Gamma F\,r_1^{-a_1}r_2^{-a_2}\|_{L^2(\R_+^2)}
 \lesssim\|F\phi^{-b}\|_{L^2(0,\frac\pi4)}.
\]
\end{lemma}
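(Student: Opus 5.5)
The plan is to expand the square of the left-hand side, integrate out the radial variables explicitly, and reduce to a bilinear weighted Hardy--Littlewood--Sobolev form in $(\phi,\psi)$ to which Lemma~\ref{l:wHLS} applies with $r=s=2$.

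\emph{Step 1: the kernel.} Take $F$ smooth and supported away from $0$ and $\pi/4$, and insert the damping factor $e^{-\epsilon(r_1+r_2)}$. Then
\[
 \iint_{\R_+^2}|\cE_\Gamma F|^2r_1^{-2a_1}r_2^{-2a_2}e^{-\epsilon(r_1+r_2)}\dd r_1\dd r_2
 =\iint F(\phi)\overline{F(\psi)}\,K_\epsilon(\phi,\psi)\dd\phi\dd\psi ,
\]
where $K_\epsilon$ factors as
\[
 K_\epsilon(\phi,\psi)=\int_0^\infty e^{-(\epsilon-iu_1)r_1}r_1^{-2a_1}\dd r_1
 \int_0^\infty e^{-(\epsilon-iu_2)r_2}r_2^{-2a_2}\dd r_2,
\]
with $u_1=\cos\phi-\cos\psi$ and $u_2=\sin\phi-\sin\psi$. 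Since $0<2a_j<1$, the damped integral formula from Section~\ref{s:reductions} gives, uniformly in $\epsilon$,
\[
 |K_\epsilon(\phi,\psi)|\lesssim|\cos\phi-\cos\psi|^{2a_1-1}|\sin\phi-\sin\psi|^{2a_2-1}.
\]
Fatou's lemma removes the damping at the end, and density then extends the estimate to general $F$.

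\emph{Step 2: geometry of the arc.} On $[0,\pi/4]$ we have $|\sin\phi-\sin\psi|\sim|\phi-\psi|$. Also
\[
 |\cos\phi-\cos\psi|=2\sin\tfrac{\phi+\psi}2\,\sin\tfrac{|\phi-\psi|}2\sim(\phi+\psi)|\phi-\psi|.
\]
This is exactly the degeneracy of the cosine phase at $\phi=0$. With $\lambda:=2-2a_1-2a_2\in(0,1)$ and $c:=1-2a_1>0$, these give
\[
 |K|\lesssim|\phi-\psi|^{-\lambda}(\phi+\psi)^{-c}\le|\phi-\psi|^{-\lambda}\phi^{-c/2}\psi^{-c/2}.
\]

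\emph{Step 3: Stein--Weiss.} Put $G:=|F|\phi^{-b}$ on $(0,\pi/4)$, extended by zero to $\R$. The form is then bounded by
\[
 \iint\frac{G(\phi)G(\psi)}{|\phi|^{\alpha}|\psi|^{\alpha}|\phi-\psi|^{\lambda}}\dd\phi\dd\psi,
 \qquad \alpha=\frac c2-b=a_1+a_2-\frac12 .
\]
The hypotheses give $0<\alpha<1/2=1/r'$, so $\alpha+\beta=2\alpha\ge0$ with $\beta=\alpha$. Moreover $1/r+1/s=1$, and
\[
 1+2\alpha+\lambda=1+(2a_1+2a_2-1)+(2-2a_1-2a_2)=2 .
\]
Lemma~\ref{l:wHLS} therefore applies and bounds the form by $\|G\|_{L^2}^2=\|F\phi^{-b}\|_{L^2}^2$, which is the claim.

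The only delicate point is Step~1. One has to check that the oscillatory $r$-integrals, which converge only conditionally, are correctly handled by the damping. The needed bound on $K_\epsilon$ is uniform in $\epsilon$ but only pointwise. To pass to the limit, I would bound the damped form by the undamped majorant: it is integrable by Step~3, so Fatou applies on the left. This majorant domination is what makes the argument rigorous. The rest is the elementary comparison in Step~2 and bookkeeping of exponents.
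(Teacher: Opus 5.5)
Your proposal is correct and follows the paper's proof essentially step for step: the same factorized kernel bound obtained via damping, the same comparison $|\cos\phi-\cos\psi|\gtrsim(\phi\psi)^{1/2}|\phi-\psi|$ (which you derive from the exact size $(\phi+\psi)|\phi-\psi|$ plus AM--GM), and the same application of Lemma~\ref{l:wHLS} with $r=s=2$, $\alpha=\beta=a_1+a_2-\frac12$, $\lambda=2(1-a_1-a_2)$. Your remarks on removing the damping via the $\epsilon$-uniform majorant and Fatou just spell out what the paper's damping convention in Section~\ref{s:reductions} already asserts.
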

\begin{proof}
Expanding the squared norm gives a quadratic form with kernel $K_1K_2$, where
\[
 K_1(\phi,\psi)=\int_0^\infty e^{ir(\cos\phi-\cos\psi)}r^{-2a_1}\dd r,
 \quad
 K_2(\phi,\psi)=\int_0^\infty e^{ir(\sin\phi-\sin\psi)}r^{-2a_2}\dd r.
\]
The damping convention above gives
\[
 |K_1K_2|\lesssim
 |\cos\phi-\cos\psi|^{2a_1-1}|\sin\phi-\sin\psi|^{2a_2-1}.
\]
For $\phi,\psi\in(0,\pi/4)$,
\[
 |\sin\phi-\sin\psi|\sim|\phi-\psi|,\qquad
 |\cos\phi-\cos\psi|\gtrsim(\phi\psi)^{\frac12}|\phi-\psi|.
\]
Consequently,
\[
 |K_1K_2|\lesssim
 (\phi\psi)^{-(\frac12-a_1)}|\phi-\psi|^{-2(1-a_1-a_2)}.
\]
Set $G(\phi)=F(\phi)\phi^{-b}$,
$\alpha=a_1+a_2-1/2$, and $\lambda=2(1-a_1-a_2)$. Then
$0<\alpha<1/2$, $0<\lambda<1$, and $2\alpha+\lambda=1$. Applying
Lemma~\ref{l:wHLS} with $r=s=2$ and $\beta=\alpha$, after extension by zero,
yields
\[
 \|\cE_\Gamma F\,r_1^{-a_1}r_2^{-a_2}\|_2^2
 \lesssim\int_0^{\frac\pi4}\!\int_0^{\frac\pi4}
 \frac{|G(\phi)G(\psi)|}{\phi^\alpha\psi^\alpha|\phi-\psi|^\lambda}
 \dd\phi\dd\psi\lesssim\|G\|_2^2.\qedhere
\]
\end{proof}

We also use the classical restriction theorem for the circle, in the formulation
of \cite[Theorem~1]{Mandel25}:
\begin{equation}\label{e:circle}
 \|\cE_\Gamma F\|_{L^t(\R^2)}\lesssim\|F\|_{L^s(0,\frac\pi4)},
 \qquad \frac1s+\frac3t=1,\quad 4<t\leq\infty.
\end{equation}
\begin{proof}[Proof of Proposition~\ref{p:MM}]
For sufficiently small $\epsilon>0$, set
\[
 \frac1s=\frac14+3\epsilon,\qquad \frac1t=\frac14-\epsilon,
\]
and choose $\theta\in(0,1)$ so that
\begin{equation}\label{e:critical-point}
 \frac1p=\frac{1-\theta}{s}+\frac\theta2,\qquad
 \frac1q=\frac{1-\theta}{t}+\frac\theta2
\end{equation}
lies on \eqref{e:critical}. Equivalently,
\begin{equation}\label{e:epsilon}
 \frac1P=(1-\theta)\left(\frac14-3\epsilon\right),\qquad
 \frac1Q=(1-\theta)\left(\frac14+\epsilon\right).
\end{equation}
Equation \eqref{e:critical-aux} requires
\[
 1=(1-\theta)\left(\frac d2-(2d-4m)\epsilon\right),
\]
which determines an admissible $\theta$ for small $\epsilon$. As $\epsilon\downarrow0$,
$\theta\to(d-2)/d$ and $P,Q\to2d$. These points approach the diagonal endpoint
from within \eqref{e:new-segment}.

Choose
\[
 a_1=\frac{d-k-1}{\theta Q},\qquad a_2=\frac{k-1}{\theta Q}.
\]
At $\epsilon=0$ their limiting values are
\[
 a_1=\frac{d-k-1}{2(d-2)},\qquad a_2=\frac{k-1}{2(d-2)}.
\]
Both lie strictly between $0$ and $1/2$, and their sum is $1/2$. The strict upper
bounds persist for small $\epsilon>0$. To check the remaining lower bound, use
$\theta=1-1/P-3/Q$ and \eqref{e:critical-aux} to obtain
\[
 \theta\bigl(2(a_1+a_2)-1\bigr)
 =(d-m-1)\left(\frac1Q-\frac1P\right)
 =4\epsilon(d-m-1)(1-\theta)>0.
\]
Lemma~\ref{l:weighted-L2} therefore applies with $b=1-2a_1-a_2$.
Interpolating it with \eqref{e:circle} by Lemma~\ref{l:interpolation} gives
\begin{equation}\label{e:MM-interpolated}
 \big\|\cE_\Gamma F\,r_1^{-\frac{d-k-1}{Q}}r_2^{-\frac{k-1}{Q}}\big\|_q
 \lesssim\|F\phi^{-\theta b}\|_p.
\end{equation}
Finally,
\[
 \theta b-\frac{k-1}{P}
 =1-\frac{k}{P}-\frac{2d-k}{Q}
 =(d-m-k)\left(\frac1P-\frac1Q\right)\leq0.
\]
Since $0<\phi<\pi/4<1$, the last input norm is bounded by the required one.
\end{proof}

\section{The remainder--remainder contribution}\label{s:RR}
Consider
\[
 \cT_\rRR h(r_1,r_2):=\int_0^{\frac\pi4}h(\phi)
 R_{\frac{d-k}{2}-1}(r_1\cos\phi)R_{\frac k2-1}(r_2\sin\phi)\dd\phi.
\]
\begin{proposition}\label{p:RR}
Let $h$ be supported in $(0,\pi/4)$. For $(p,q)$ on \eqref{e:critical} with
$q>\qstar$ sufficiently close to $\qstar$,
\begin{equation}\label{e:RR-goal}
 \big\|\cT_\rRR h\,r_1^{-\frac{d-k-1}{Q}}r_2^{-\frac{k-1}{Q}}\big\|_{L^q(\R_+^2)}
 \lesssim\big\|h\phi^{-\frac{k-1}{P}}\big\|_{L^p(0,\frac\pi4)}.
\end{equation}
\end{proposition}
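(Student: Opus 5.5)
The plan is to dispense with oscillation entirely and use only the size bound \eqref{e:remainder} on both factors. Since $\cos\phi\sim1$ on the support of $h$, we have
\[
 |\cT_\rRR h(r_1,r_2)|\lesssim (1+r_1)^{-1}\int_0^{\frac\pi4}|h(\phi)|(1+r_2\phi)^{-1}\dd\phi .
\]
The $r_1$-dependence therefore separates completely. We will need $\|(1+r_1)^{-1}r_1^{-\frac{d-k-1}{Q}}\|_{L^q(\R_+)}<\infty$. This holds because $q>2$ gives decay at infinity, and near $0$ it requires $(d-k-1)q/Q<1$, i.e. $(d-k-1)(\frac q2-1)<1$. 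At $q=\qstar$ this reads $(d-k-1)/(d-1)<1$, which is true. It persists for $q$ close to $\qstar$, and this is one of the local integrability conditions alluded to in \eqref{e:radial-range}.

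It then remains to prove the one-dimensional estimate
\[
 \Big\|r_2^{-\frac{k-1}{Q}}\int_0^{\frac\pi4}|h(\phi)|(1+r_2\phi)^{-1}\dd\phi\Big\|_{L^q(\R_+,\dd r_2)}\lesssim\|h\phi^{-\frac{k-1}{P}}\|_{L^p}.
\]
Put $H=h\phi^{-\frac{k-1}{P}}$. The kernel $K(r,\phi)=r^{-\frac{k-1}{Q}}\phi^{\frac{k-1}{P}}(1+r\phi)^{-1}$ is then dominated by a homogeneous kernel of degree $-1$ up to a power twist. Concretely, I will split $(1+r\phi)^{-1}\le \min(1,(r\phi)^{-1})\le (r\phi)^{-\gamma}$ for any $\gamma\in[0,1]$. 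This gives
\[
 |K|\lesssim r^{-\frac{k-1}{Q}-\gamma}\phi^{\frac{k-1}{P}-\gamma}\mathbf 1_{\phi<\pi/4}.
\]
A cleaner route is to note that after the change of variables $\phi\mapsto 1/\phi$ the operator becomes of Hardy/Hilbert type, $\int |H(\phi)|\,\frac{\dd\phi}{\phi^{-1}+r}$ with power weights. Its $L^p\to L^q$ boundedness with $p<q$ then follows from a scaling check together with the Stein--Weiss Lemma~\ref{l:wHLS}, or from Hardy-type weighted inequalities.

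Scaling is the check that fixes consistency. Ignoring the cutoff $\phi<\pi/4$, the map $H\mapsto\int H(\phi)K(r,\phi)\dd\phi$ is homogeneous. For $L^p\to L^q$ boundedness the homogeneity must balance, which forces
\[
 \frac{k-1}{P}-\frac{k-1}{Q}-1+1-\frac1p...
\]
The correct balance condition should be $\frac1q+\frac{k-1}{Q}=\frac1{p'}-\frac{k-1}{P}$, possibly up to the $\frac{d-k-1}{Q}$ term. Since the $r_1$ integral has been taken out without scaling, I expect the homogeneity to fail in general. The cutoff $\phi<\pi/4$ is then essential: it lets us trade excess homogeneity for a favorable power. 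I will therefore split the $r_2$-integration into $r_2\le1$ and $r_2\ge1$.

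For $r_2\le1$, bound the inner integral by $\|H\|_p\|\phi^{\frac{k-1}{P}}\|_{p'}$ and use $r_2^{-\frac{k-1}{Q}}\in L^q(0,1)$, which holds since $(k-1)(\frac q2-1)<1$ near $\qstar$.

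For $r_2\ge1$, split $\phi<1/r_2$ and $\phi>1/r_2$:

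\[
 \int_0^{1/r_2}|h|\dd\phi+r_2^{-1}\int_{1/r_2}^{\pi/4}|h|\phi^{-1}\dd\phi .
\]

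Each piece is handled by Hölder and weighted Hardy inequalities. After the substitution $r_2=1/u$, these become the standard Hardy operator and its dual on $(0,1)$ with power weights, and since $p<q$ with a bounded domain they are bounded whenever the power exponents are strictly subcritical.

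The main obstacle is the exponent bookkeeping in the $r_2\ge1$ region, namely verifying that the power weights from the $L^q(r_2^{-\frac{k-1}{Q}q}\dd r_2)$ norm and from $\phi^{-\frac{k-1}{P}}$ satisfy the strict Hardy conditions near the diagonal endpoint. I would check these at $P=Q=2d$ and conclude by continuity.
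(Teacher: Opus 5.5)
Your proposal is correct and takes a different, more laborious route than the paper. The reduction is the same as the paper's: the pointwise bound from \eqref{e:remainder}, factoring out $(1+r_1)^{-1}$, and finiteness of the $r_1$-norm by \eqref{e:radial-range}. For the one-dimensional $r_2$-estimate, however, the paper does not split anything. It applies Minkowski's integral inequality to move the $L^q_{r_2}$ norm inside the $\phi$-integral. By scaling it gets $\|r_2^{-(k-1)/Q}(1+r_2\phi)^{-1}\|_{L^q(\R_+)}\lesssim\phi^{(k-1)/Q-1/q}$; this uses $(k-1)q/Q<1$, the same condition you need for $r_2\le1$. It then closes with a single H\"older inequality on $(0,\pi/4)$, whose only requirement is
\[
 \frac{k-1}{Q}-\frac1q+\frac{k-1}{P}+\frac1{p'}=k\Big(\frac1P+\frac1Q\Big)>0.
\]
This holds identically along the segment, so no continuity argument from $P=Q=2d$ is needed.

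Your decomposition into $r_2\le1$, $r_2\ge1$ and $\phi\lessgtr1/r_2$ also works. The step you call the main obstacle is not one, and no Hardy inequality is needed. Set $c=(k-1)/Q$. H\"older at each fixed $r_2\ge1$ bounds both $r_2^{-c}\int_0^{1/r_2}|h|\dd\phi$ and $r_2^{-c-1}\int_{1/r_2}^{\pi/4}|h|\phi^{-1}\dd\phi$ by $\|h\phi^{-(k-1)/P}\|_{p}\,r_2^{-(c+(k-1)/P+1/p')}$. For the second piece, note that $(k-1)/P<1/p$ near the endpoint. This power lies in $L^q(1,\infty)$ under exactly the same condition $k(1/P+1/Q)>0$.

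So your route is the paper's estimate carried out region by region, and it buys nothing extra. The Minkowski formulation is shorter and makes plain that the estimate has strict slack $k(1/P+1/Q)$, so no homogeneity balance is involved. You can drop your unfinished scaling paragraph and the Stein--Weiss/Hilbert-type alternative.
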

\begin{proof}
By \eqref{e:remainder}, $\cos\phi\sim1$, and $\sin\phi\sim\phi$,
\[
 |\cT_\rRR h(r_1,r_2)|\lesssim
 \int_0^{\frac\pi4}\frac{|h(\phi)|}{(1+r_1)(1+r_2\phi)}\dd\phi.
\]
For $q>\qstar$ close enough to $\qstar$, we have
\begin{equation}\label{e:radial-range}
 \frac{d-1}{2d}>\frac1q>\frac{d-m-1}{2(d-m)}
 =\max\left\{\frac{k-1}{2k},\frac{d-k-1}{2(d-k)}\right\}.
\end{equation}
In particular,
\begin{equation}\label{e:radial-integral}
 \Big\|\frac{r_1^{-\frac{d-k-1}{Q}}}{1+r_1}\Big\|_{L^q(\R_+)}<\infty,
\end{equation}
because $(d-k-1)q/Q<1$ at zero, while $q>1$ suffices at infinity.
Similarly, scaling gives
\[
 \Big\|\frac{r_2^{-\frac{k-1}{Q}}}{1+r_2\phi}\Big\|_{L^q(\R_+)}
 \lesssim\phi^{\frac{k-1}{Q}-\frac1q}.
\]
Minkowski's inequality now bounds the left-hand side of \eqref{e:RR-goal} by
\[
 \int_0^{\frac\pi4}|h(\phi)|\phi^{\frac{k-1}{Q}-\frac1q}\dd\phi.
\]
H\"older's inequality completes the proof, since
\[
 \frac{k-1}{Q}-\frac1q+\frac{k-1}{P}+\frac1{p'}
 =k\left(\frac1P+\frac1Q\right)>0.
\]
\end{proof}

\section{The main--remainder contribution}\label{s:MR}
The next proposition is stated and proved in strong type. The proof keeps the
angular integral inside the remaining radial norm and then uses Young's
inequality in logarithmic coordinates.
\begin{proposition}\label{p:MR}
Let $h$ be supported in $(0,\pi/4)$. For $(p,q)$ on \eqref{e:critical} with
$q>\qstar$ sufficiently close to $\qstar$,
\[
 \big\|\cT_\rMR h\,r_1^{-\frac{d-k-1}{Q}}r_2^{-\frac{k-1}{Q}}\big\|_{L^q(\R_+^2)}
 \lesssim\big\|h\phi^{-\frac{k-1}{P}}\big\|_{L^p(0,\frac\pi4)}.
\]
\end{proposition}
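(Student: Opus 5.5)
The plan is to treat the oscillatory factor in $r_1$ with a one-dimensional weighted Fourier inequality (Pitt's inequality) at each fixed $r_2$. The remainder factor in $r_2$ is then handled only through its size bound \eqref{e:remainder}, exactly as in Proposition~\ref{p:RR}. The representative term is
\[
 \cT_\rMR h(r_1,r_2)=\int_0^{\frac\pi4}h(\phi)e^{ir_1\cos\phi}R_{\frac k2-1}(r_2\sin\phi)\dd\phi .
\]
Writing $e^{ir_1\cos\phi}=e^{ir_1}e^{-ir_1(1-\cos\phi)}$ and substituting $v=1-\cos\phi$, for which $v\sim\phi^2$ and $\dd v\sim\phi\dd\phi$ on $(0,\pi/4)$, gives, up to a unimodular factor,
\[
 \cT_\rMR h(\cdot,r_2)=\widehat{G_{r_2}}(r_1),\qquad
 G_{r_2}(v)=h(\phi)R_{\frac k2-1}(r_2\sin\phi)\frac{\dd\phi}{\dd v}.
\]

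\emph{Step 1 (Pitt in $r_1$).} Set $\gamma=(d-k-1)/Q$ and $\beta=\gamma+1-\frac1p-\frac1q$. I would use Pitt's inequality $\||\xi|^{-\gamma}\widehat G\|_{L^q(\R)}\lesssim\||v|^{\beta}G\|_{L^p(\R)}$. It requires $p\le q$ and $0\le\gamma<1/q$, and under these hypotheses the remaining constraint $\beta<1/p'$ is automatic. We have $p<q$ near the endpoint, and $\gamma q<1$ is precisely the local integrability condition \eqref{e:radial-range}. Returning to the variable $\phi$ and using $|R|\lesssim(1+r_2\phi)^{-1}$, this gives, for each $r_2$,
\[
 \big\|\cT_\rMR h(\cdot,r_2)\,r_1^{-\gamma}\big\|_{L^q(\R_+)}
 \lesssim\Big\|\frac{|h(\phi)|\,\phi^{2\beta-1+\frac1p}}{1+r_2\phi}\Big\|_{L^p(0,\frac\pi4)} .
\]

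\emph{Step 2 (radial norm in $r_2$).} Take the $L^q$ norm in $r_2$ with weight $r_2^{-(k-1)/Q}$. Since $q\ge p$, Minkowski's integral inequality lets me move the $L^q_{r_2}$ norm inside the $L^p_\phi$ norm, keeping the angular integral inside as announced. The scaling computation from Proposition~\ref{p:RR} gives
\[
 \Big\|\frac{r_2^{-\frac{k-1}{Q}}}{1+r_2\phi}\Big\|_{L^q(\R_+)}\lesssim\phi^{\frac{k-1}{Q}-\frac1q},
\]
which is finite again by \eqref{e:radial-range}. The left-hand side is therefore bounded by $\|h\phi^{\kappa}\|_{L^p}$ with $\kappa=2\beta-1+\frac1p+\frac{k-1}{Q}-\frac1q$.

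\emph{Step 3 (exponent bookkeeping).} Using \eqref{e:half}, I expect
\[
 \kappa+\frac{k-1}{P}=\frac kP+\frac{2d-k}{Q}-1=(d-m-k)\Big(\frac1Q-\frac1P\Big)\cdot(-1)\ge0 ,
\]
which is the same identity as at the end of the proof of Proposition~\ref{p:MM}. Since $\phi<1$, this gives $\|h\phi^\kappa\|_p\le\|h\phi^{-(k-1)/P}\|_p$, as required.

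The main obstacle is Step 1. I must verify that Pitt's inequality is usable with the non-smooth change of variables, namely that the Jacobian factor $\dd\phi/\dd v\sim\phi^{-1}$ is absorbed correctly into the power weight. I must also check that all the hypotheses ($p\le q$, $0\le\gamma<1/q$) hold uniformly for $q$ close to $\qstar$. If Pitt's inequality is awkward here, the fallback is the route the paper signals: prove the one-dimensional weighted estimate directly by passing to logarithmic coordinates $r_2=e^{s}$, $\phi=e^{-u}$. In those coordinates the kernel $(1+r_2\phi)^{-1}$, multiplied by the power weights, becomes an integrable convolution kernel in $s-u$, and Young's inequality closes the estimate.
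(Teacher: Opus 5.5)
Your proof is correct, and it takes a different route from the paper's in both variables.

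\textbf{How the two routes differ.} The paper handles the $r_1$-oscillation with Lemma~\ref{l:cosine}. That lemma is an $L^{q'}\to L^q$ bound, obtained by interpolating a Stein--Weiss weighted $L^2$ estimate (built on $|\cos\phi-\cos\psi|\gtrsim(\phi\psi)^{1/2}|\phi-\psi|$) with $L^1\to L^\infty$. The angular exponent is therefore $q'<p$. When $d-m-k=0$, passing from $L^{q'}_\phi$ to $L^p_\phi$ by Minkowski and H\"older is exactly scaling-critical. The paper therefore keeps the angular integral inside the $r_2$-norm and closes with Young's inequality in logarithmic coordinates.

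You instead straighten the degenerate phase by $v=1-\cos\phi\sim\phi^2$ and apply Pitt's inequality directly at the pair $(p,q)$, off the Hausdorff--Young line. Because $p\le q$, the $r_2$-step becomes the Minkowski-and-scaling argument of Proposition~\ref{p:RR}, and no final H\"older step is needed. Pitt's inequality at the pair $(q',q)$, where $\beta=\gamma$, reproduces Lemma~\ref{l:cosine} exactly. So the paper's lemma is the Hausdorff--Young-line case of the tool you use.

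\textbf{What each buys.} The paper's route is self-contained given Lemmas~\ref{l:wHLS} and~\ref{l:interpolation}, which it needs elsewhere anyway. Yours imports Pitt's theorem, but it is shorter and makes the exponent bookkeeping transparent.

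\textbf{Your stated obstacles, and one slip.} The worries in your last paragraph are unfounded. The map $v=1-\cos\phi$ is a smooth diffeomorphism of $(0,\pi/4)$ with $\dd v=\sin\phi\dd\phi$, so the substitution is exact up to constants. The inequality $p<q$ holds near the endpoint. Both $\gamma q<1$ and $(k-1)q/Q<1$ are contained in \eqref{e:radial-range}. The fallback is therefore unnecessary.

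In Step~3 the identity is $\kappa+\frac{k-1}{P}=(d-m-k)\bigl(\frac1Q-\frac1P\bigr)$, without the factor $-1$. This is nonnegative because $d-m-k\geq0$ and $P>Q$, which is exactly the conclusion you need.
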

Define the cosine operator
\begin{equation}\label{e:cosine}
 \cC F(r):=\int_0^{\frac\pi4}F(\phi)e^{ir\cos\phi}\dd\phi.
\end{equation}
\begin{lemma}\label{l:cosine}
Let $2\leq q<2(d-k)/(d-k-1)$, and set $1/Q=1/2-1/q$, allowing $Q=\infty$.
Then
\[
 \|\cC F\,r^{-\frac{d-k-1}{Q}}\|_{L^q(\R_+)}
 \lesssim\|F\phi^{-\eta}\|_{L^{q'}(0,\frac\pi4)},\qquad
 \eta=\frac1q-\frac{2(d-k-1)}Q.
\]
\end{lemma}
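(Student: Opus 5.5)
The plan is to reduce the cosine operator $\cC$ to a one-dimensional Fourier transform by changing variables, and then apply Pitt's weighted Hausdorff--Young inequality. That inequality sits naturally in the diagonal setting $L^{q'}\to L^q$, and its admissibility condition should reproduce exactly the restriction $q<2(d-k)/(d-k-1)$ in the statement.

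\textbf{Change of variables.} Set $v=1-\cos\phi$. Then $v$ ranges over $[0,v_0]$ with $v_0=1-\cos(\pi/4)$, and $\dd v=\sin\phi\dd\phi$. Since $0<\phi<\pi/4$, we have $v\sim\phi^2$ and $\sin\phi\sim\phi\sim v^{1/2}$. Writing $G(v):=F(\phi)/\sin\phi$, extended by zero outside $[0,v_0]$, we get
\[
 \cC F(r)=e^{ir}\int_0^{v_0}G(v)e^{-irv}\dd v=e^{ir}\,\widehat G(r).
\]
Hence $|\cC F(r)|=|\widehat G(r)|$, and it suffices to bound $\|\widehat G\,|r|^{-a}\|_{L^q(\R)}$ with $a:=(d-k-1)/Q$. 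Integrating over all of $\R$ rather than $\R_+$ only enlarges the left-hand side.

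\textbf{Pitt's inequality.} In one dimension it states
\[
 \|\widehat G\,|r|^{-a}\|_{L^q(\R)}\lesssim\|G\,|v|^{b}\|_{L^{s}(\R)}
\]
provided $1<s\le q<\infty$, $0\le a<1/q$, $0\le b<1/s'$, and $b-a=1-1/s-1/q$. For $s=q'$ the scaling condition forces $b=a$. The requirement $a<1/q$ reads $(d-k-1)(\frac12-\frac1q)<\frac1q$, which is equivalent to $\frac1q>\frac{d-k-1}{2(d-k)}$, that is, $q<2(d-k)/(d-k-1)$. This is precisely the hypothesis of the lemma. The condition $b=a<1/s'=1/q$ is the same inequality, and $a\ge0$ holds because $q\ge2$. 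In the endpoint case $q=2$ we have $Q=\infty$ and $a=b=0$, so the estimate is simply Plancherel.

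\textbf{Return to $\phi$.} Using $\dd v\sim\phi\dd\phi$, $v\sim\phi^2$ and $|G|\sim|F|\phi^{-1}$, we compute
\[
 \|G\,v^{a}\|_{L^{q'}}^{q'}\sim\int_0^{\frac\pi4}|F(\phi)|^{q'}\phi^{(2a-1)q'+1}\dd\phi .
\]
The exponent satisfies $(2a-1)q'+1=q'\bigl(2a-1+\tfrac1{q'}\bigr)=-q'\bigl(\tfrac1q-2a\bigr)=-q'\eta$. This matches $\eta=\frac1q-\frac{2(d-k-1)}{Q}$, so the right-hand side is $\|F\phi^{-\eta}\|_{L^{q'}}^{q'}$ and the lemma follows.

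\textbf{Main obstacle.} The real content is invoking Pitt's inequality in the right form, including its exact range of exponents; the rest is bookkeeping. If one prefers to avoid citing Pitt, a fallback is to interpolate via Lemma~\ref{l:interpolation}. The first endpoint would be the $L^2$ Plancherel estimate $\|\cC F\|_2\lesssim\|F\phi^{-1/2}\|_2$. The second would be the trivial bound $\|\widehat G\|_\infty\le\|G\|_1$. This fallback, however, only reaches the unweighted Hausdorff--Young line $a=0$. The genuinely weighted range with $a>0$ therefore still needs a Stein--Weiss-type input, for instance Pitt's inequality itself, or its proof via Lemma~\ref{l:wHLS} applied to the Riesz-potential form of $|r|^{-a}\widehat G$.
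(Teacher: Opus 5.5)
Your proof is correct, but it takes a different route from the paper. You substitute $v=1-\cos\phi$, so that $\cC F$ becomes, up to the unimodular factor $e^{ir}$, a genuine Fourier transform of $G=F/\sin\phi$. You then invoke the diagonal case $s=q'$, $b=a$ of Pitt's inequality as a black box. The degeneracy of the cosine phase at $\phi=0$ is absorbed into the Jacobian and the relation $v\sim\phi^2$. This is exactly where the factor $2$ in $\eta=\frac1q-2a$ comes from. The hypothesis on $q$ then appears as Pitt's sharp condition $a<1/q$.

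The paper never passes to a Fourier transform. It first proves a weighted $L^2$ endpoint \eqref{e:cosine-L2}, $\|\cC F\,r^{-\tilde a}\|_{L^2}\lesssim\|F\phi^{-(1/2-2\tilde a)}\|_{L^2}$ for $0<\tilde a<\frac12$. To do so, it bounds the kernel of the quadratic form by $(\phi\psi)^{-(1/2-\tilde a)}|\phi-\psi|^{-(1-2\tilde a)}$ and applies Lemma~\ref{l:wHLS} in the $\phi$ variable. It then interpolates against $\|\cC F\|_\infty\le\|F\|_1$ via Lemma~\ref{l:interpolation} with $\theta=2/q$. In your notation $\tilde a=a/\theta=qa/2$, so the paper's requirement $\tilde a<\frac12$ is precisely your $a<1/q$.

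This is the fallback you sketch in your last paragraph, with the unweighted Plancherel endpoint replaced by a weighted one. It is essentially Stein's classical interpolation proof of Pitt's inequality, carried out directly on the arc. Your version is shorter and makes the numerology transparent, at the cost of an external citation whose exact admissible range must be quoted correctly, which you did. The paper's version is self-contained and uses only the two tools it already needs for Lemmas~\ref{l:weighted-L2} and~\ref{l:sine}.
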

\begin{proof}
When $q=2$, the substitution $u=\cos\phi$ and Plancherel's theorem give
\[
 \|\cC F\|_{L^2(\R_+)}^2
 \lesssim\int_0^{\frac\pi4}\frac{|F(\phi)|^2}{\sin\phi}\dd\phi
 \sim\|F\phi^{-\frac12}\|_2^2.
\]
For $q>2$, first let $0<a<1/2$ and $b=1/2-2a$. We claim
\begin{equation}\label{e:cosine-L2}
 \|\cC F\,r^{-a}\|_{L^2(\R_+)}\lesssim\|F\phi^{-b}\|_{L^2(0,\frac\pi4)}.
\end{equation}
The squared norm has kernel
\[
 K(\phi,\psi)=\int_0^\infty e^{ir(\cos\phi-\cos\psi)}r^{-2a}\dd r,
 \qquad
 |K(\phi,\psi)|\lesssim(\phi\psi)^{-(\frac12-a)}|\phi-\psi|^{-(1-2a)}.
\]
With $G=F\phi^{-b}$, Lemma~\ref{l:wHLS} applies with $r=s=2$,
$\alpha=\beta=a$, and $\lambda=1-2a$, yielding \eqref{e:cosine-L2}.

Interpolate \eqref{e:cosine-L2} with
$\|\cC F\|_\infty\leq\|F\|_1$. Choose
\[
 \theta=\frac2q,\qquad
 a=\frac{d-k-1}{\theta Q}=\frac{(d-k-1)(q-2)}4.
\]
The stated range of $q$ is exactly what guarantees $0<a<1/2$.
Lemma~\ref{l:interpolation} gives the asserted bound, since
$\theta a=(d-k-1)/Q$ and $\theta b=\eta$.
\end{proof}

\begin{proof}[Proof of Proposition~\ref{p:MR}]
Put $L=\pi/4$ and write a representative term as
\[
 \cT_\rMR h(r_1,r_2)=\int_0^L
 h(\phi)e^{ir_1\cos\phi}R_{\frac k2-1}(r_2\sin\phi)\dd\phi.
\]
Set
\[
 a=\frac{d-k-1}{Q},\qquad c=\frac{k-1}{Q},\qquad \eta=\frac1q-2a,
\]
and put $u(\phi)=h(\phi)\phi^{-(k-1)/P}$ and $A=(k-1)/P-\eta$.
For $q$ sufficiently close to $\qstar$, \eqref{e:radial-range} holds. Applying
Lemma~\ref{l:cosine} for each $r=r_2$, followed by \eqref{e:remainder}, gives
\begin{align*}
 \|\cT_\rMR h\,r_1^{-a}r_2^{-c}\|_{L^q(\R_+^2)}\lesssim
 \Big\|r^{-c}\Big(\int_0^L
 \frac{|u(\phi)|^{q'}\phi^{Aq'}}{(1+r\phi)^{q'}}\dd\phi
 \Big)^{\frac{1}{q'}}\Big\|_{L^q(\R_+)}.
\end{align*}
Define
\[
 A_\ast:=\frac1p+\frac2q-1-c.
\]
The critical relation gives
\[
 A-A_\ast=(d-m-k)\left(\frac1Q-\frac1P\right)\geq0.
\]
Since $0<\phi<L<1$, replacing $A$ by $A_\ast$ only increases the last expression.
Extend $u$ by zero to $(0,\infty)$ and define
\[
 \cH u(r)
 :=
 r^{-c}\Big(\int_0^\infty
 \frac{|u(\phi)|^{q'}\phi^{A_\ast q'}}{(1+r\phi)^{q'}}\dd\phi
 \Big)^{\frac{1}{q'}}.
\]
Use logarithmic coordinates $\phi=e^t$, $r=e^{-s}$, and put
\[
 U(t)=e^{\frac tp}u(e^t),\qquad
 \delta=\frac1q-c,\qquad K(v)=\frac{e^{\delta q'v}}{(1+e^v)^{q'}}.
\]
By \eqref{e:radial-range}, $0<\delta<1$. Hence $K$ decays exponentially at
both ends of the real line. Since $A_\ast+1/q'-1/p=\delta$, direct substitution yields
\[
 \bigl[e^{-\frac sq}\cH u(e^{-s})\bigr]^{q'}
 =\int_\R |U(t)|^{q'}K(t-s)\dd t.
\]
Choose $\rho$ by $1/\rho=1+q'/q-q'/p$. Since $2<p<q$, we have
$1<p/q'<q/q'<\infty$ and $1<\rho<\infty$. Young's convolution inequality gives
\[
 \|\cH u\|_{L^q(\R_+)}^{q'}
 \lesssim\big\||U|^{q'}\big\|_{L^{p/q'}(\R)}\|K\|_{L^\rho(\R)}
 \lesssim\|U\|_{L^p(\R)}^{q'}=\|u\|_{L^p(\R_+)}^{q'}.
\]
This proves the required strong estimate.
\end{proof}

\section{The remainder--main contribution}\label{s:RM}
\begin{proposition}\label{p:RM}
Let $h$ be supported in $(0,\pi/4)$. For $(p,q)$ on \eqref{e:critical} with
$q>\qstar$ sufficiently close to $\qstar$,
\[
 \big\|\cT_\rRM h\,r_1^{-\frac{d-k-1}{Q}}r_2^{-\frac{k-1}{Q}}\big\|_{L^q(\R_+^2)}
 \lesssim\big\|h\phi^{-\frac{k-1}{P}}\big\|_{L^p(0,\frac\pi4)}.
\]
\end{proposition}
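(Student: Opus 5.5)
The plan is to mirror the proof of Proposition~\ref{p:MR}, with the roles of the two radial variables exchanged: the oscillatory factor is now $e^{ir_2\sin\phi}$ and the remainder factor is $R_{\frac{d-k}{2}-1}(r_1\cos\phi)$. A representative term is
\[
 \cT_\rRM h(r_1,r_2)=\int_0^{L}h(\phi)R_{\frac{d-k}{2}-1}(r_1\cos\phi)e^{ir_2\sin\phi}\dd\phi,\qquad L=\frac\pi4.
\]
Because $\cos\phi\sim1$ on the support, the remainder factor obeys $|R(r_1\cos\phi)|\lesssim(1+r_1)^{-1}$, uniformly in $\phi$. The phase $\sin\phi$ is nondegenerate, with $\frac{d}{d\phi}\sin\phi=\cos\phi\sim1$. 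So this term should be easier than Proposition~\ref{p:MR}, and no weighted cosine lemma is needed.

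\textbf{Step 1: the inner $r_2$-norm.} For fixed $r_1$, I would bound the $L^q(\dd r_2)$ norm with weight $r_2^{-(k-1)/Q}$. The substitution $u=\sin\phi$ turns this into a one-dimensional Fourier transform of $F_{r_1}(\phi)=h(\phi)R(r_1\cos\phi)$. The plan is to obtain
\[
 \big\|r_2^{-c}\widehat{F}(r_2)\big\|_{L^q(\R_+)}\lesssim\|F\phi^{-\gamma}\|_{L^{p}}
\]
for suitable $\gamma$, using a weighted Pitt-type inequality. It comes either from Lemma~\ref{l:wHLS} in the $L^2$ form, with kernel $|\phi-\psi|^{2c'-1}$, interpolated with $L^1\to L^\infty$ via Lemma~\ref{l:interpolation}, or directly from Pitt's inequality. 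The aim is to keep the input exponent $p$ rather than $q'$, so that Minkowski in $r_1$ is clean.

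\textbf{Step 2: the $r_1$-norm.} Since $p<q$, I would apply Minkowski's inequality (or pull the $r_1$-dependence out), using the pointwise domination $|F_{r_1}|\lesssim |h|(1+r_1)^{-1}$. Under the Fourier transform this is delicate: a pointwise bound on $F_{r_1}$ only controls norms of $F$, not $\widehat F$ pointwise. That is acceptable because Step~1 bounds $\widehat F$ by a norm of $F$. The estimate then factorizes as
\[
 \Big\|\frac{r_1^{-a}}{1+r_1}\Big\|_{L^q(\R_+)}\;\|h\phi^{-\gamma}\|_{L^p}.
\]
The first factor is finite by \eqref{e:radial-integral}, since \eqref{e:radial-range} holds near $\qstar$.

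\textbf{Step 3: weights and exponent bookkeeping.} Pitt's inequality on $\R$ requires the scaling relation $\gamma=1-\frac1p-\frac1q-c$ together with $c<\frac1q$ and the condition relating $p\le q$. Here $c=(k-1)/Q$, and $c<1/q$ follows from \eqref{e:radial-range}. The weight $\phi^{-\gamma}$ must then be dominated by $\phi^{-(k-1)/P}$, which requires $\gamma\le (k-1)/P$. By the critical relation \eqref{e:critical-aux}, this should reduce to a sign condition of the form $(d-m-k)(1/P-1/Q)\le0$ or a positive multiple of $1/Q-1/P$, as in the earlier sections.

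\textbf{Main obstacle.} I expect the hard part to be this exponent inequality, together with the validity conditions of Pitt's inequality, including the possibly negative weight exponent $\gamma$. If $\gamma$ comes out too large, the fallback is the logarithmic-coordinate Young argument of Proposition~\ref{p:MR}. That means first applying Lemma~\ref{l:cosine}'s analogue for the sine phase (plain Hausdorff--Young with weight) in $r_2$, keeping the $\phi$-integral inside, and then exploiting the extra decay $(1+r_1)^{-1}$ via Young's inequality on $\R$.
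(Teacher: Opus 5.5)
Your plan is correct and is essentially the paper's argument. For each fixed $r_1$, the paper applies a weighted sine-phase estimate in $r_2$ (Lemma~\ref{l:sine}, proved exactly by your first route: Lemma~\ref{l:wHLS} in $L^2$ with kernel $|\phi-\psi|^{2a-1}$, interpolated with $L^1\to L^\infty$). That gives an \emph{unweighted} input norm $\|h\|_{L^s}$ with $\frac1s=\frac{k+1}2-\frac kq>\frac1p$, and the paper then uses $|R(r_1\cos\phi)|\lesssim(1+r_1)^{-1}$, the finite radial integral \eqref{e:radial-integral}, and H\"older on $(0,\pi/4)$; only Fubini is needed here, not Minkowski.

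One correction: in your normalization the Pitt scaling relation is $\gamma=\frac1p+\frac1q-1-c<0$, not $1-\frac1p-\frac1q-c$. The input weight is therefore a positive power of $\phi$, bounded by $1$ on $(0,\pi/4)$, and the exponent bookkeeping you flag as the main obstacle disappears, since $\|h\phi^{-\gamma}\|_{L^p}\le\|h\|_{L^p}\le\|h\phi^{-(k-1)/P}\|_{L^p}$.
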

We use the sine operator
\[
 \cS F(r):=\int_0^{\frac\pi4}F(\phi)e^{ir\sin\phi}\dd\phi.
\]
\begin{lemma}\label{l:sine}
Let $2\leq q\leq\infty$ and $0<a<1/2$, and define $s$ by
$1/s=1+(2/q)(a-1/2)$. Then
\[
 \|\cS F\,r^{-\frac{2a}{q}}\|_{L^q(\R_+)}\lesssim\|F\|_{L^s(0,\frac\pi4)}.
\]
\end{lemma}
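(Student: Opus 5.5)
Mirroring Lemma~\ref{l:cosine}, we prove a weighted $L^2$ bound for $\cS$ and interpolate it with the trivial $L^1\to L^\infty$ bound. Since the sine phase is nondegenerate on $(0,\pi/4)$, no angular input weight should be needed, and the Stein--Weiss step reduces to the ordinary Hardy--Littlewood--Sobolev inequality.

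\emph{Step 1: the $L^2$ estimate.} For $0<a<1/2$ we claim
\[
 \|\cS F\,r^{-a}\|_{L^2(\R_+)}\lesssim\|F\|_{L^{s_0}(0,\frac\pi4)},\qquad \frac1{s_0}=\frac12+a .
\]
Expanding the square gives a quadratic form with kernel
\[
 K(\phi,\psi)=\int_0^\infty e^{ir(\sin\phi-\sin\psi)}r^{-2a}\dd r .
\]
By the damping convention, $|K|\lesssim|\sin\phi-\sin\psi|^{2a-1}\sim|\phi-\psi|^{-(1-2a)}$ on $(0,\pi/4)^2$. Lemma~\ref{l:wHLS} then applies with $\alpha=\beta=0$, $\lambda=1-2a\in(0,1)$ and $r=s=s_0$. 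The scaling condition $2/s_0+\lambda=2$ holds exactly for $1/s_0=1/2+a$, and $2/s_0\geq1$ is clear. This gives the squared bound $\|F\|_{s_0}^2$.

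\emph{Step 2: interpolation.} Interpolate Step~1 with $\|\cS F\|_{L^\infty}\leq\|F\|_{L^1}$ via Lemma~\ref{l:interpolation}, taking $\theta=2/q$. The target exponent is then $1/q=\theta/2+(1-\theta)/\infty$, and the output weight is $r^{-\theta a}=r^{-2a/q}$. The input exponent satisfies
\[
 \frac1s=\theta\Bigl(\frac12+a\Bigr)+(1-\theta)=1+\frac2q\Bigl(a-\frac12\Bigr),
\]
which matches the statement. The endpoint $q=2$ is Step~1 itself, and $q=\infty$ is the trivial bound with $s=1$.

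\emph{Main obstacle.} The only delicate point is Step~1: we must check that Lemma~\ref{l:wHLS} applies with zero weights, so that $\alpha+\beta=0$ is allowed and $1<s_0<2$, which follows from $0<a<1/2$. We must also justify the kernel bound uniformly in the damping parameter. The comparability $|\sin\phi-\sin\psi|\sim|\phi-\psi|$ uses $\cos\geq 1/\sqrt2$ on $[0,\pi/4]$. If Lemma~\ref{l:wHLS} is applied with $r\neq s$, one can alternatively invoke the classical Hardy--Littlewood--Sobolev inequality directly after extending $F$ by zero.
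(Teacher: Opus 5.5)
Your proposal is correct and follows the paper's proof: the same weighted $L^2$ bound $\|\cS F\,r^{-a}\|_{L^2}\lesssim\|F\|_{L^{2/(1+2a)}}$ via the damped kernel bound and Lemma~\ref{l:wHLS} with $\alpha=\beta=0$, $\lambda=1-2a$, then interpolation with the trivial $L^1\to L^\infty$ bound at parameter $\theta=2/q$. Your exponent and weight bookkeeping matches the statement. The closing remark about $r\neq s$ is unnecessary, since the symmetric choice $r=s=s_0$ satisfies every hypothesis of the lemma.
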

\begin{proof}
Set $r_a=2/(1+2a)$. Expanding the squared norm, using the regularized kernel bound
\[
 \Big|\int_0^\infty e^{ir(\sin\phi-\sin\psi)}r^{-2a}\dd r\Big|
 \lesssim|\phi-\psi|^{2a-1},
\]
and applying the one-dimensional weighted Hardy--Littlewood--Sobolev inequality gives
\begin{equation}\label{e:sine-L2}
 \|\cS F\,r^{-a}\|_{L^2(\R_+)}\lesssim\|F\|_{L^{r_a}(0,\frac\pi4)}.
\end{equation}
Here Lemma~\ref{l:wHLS} applies with $r=s=r_a$, $\alpha=\beta=0$,
and $\lambda=1-2a$. Interpolation with
$\|\cS F\|_\infty\leq\|F\|_1$, using parameter $2/q$, proves the result.
The endpoints $q=2$ and $q=\infty$ are the two estimates just stated.
\end{proof}

\begin{proof}[Proof of Proposition~\ref{p:RM}]
Take $a=(k-1)(q-2)/4$. For $q>\qstar$ close enough to $\qstar$,
$0<a<1/2$ and $2a/q=(k-1)/Q$. Applying Lemma~\ref{l:sine} for each fixed $r_1$
to
\[
 F(\phi)=h(\phi)R_{\frac{d-k}{2}-1}(r_1\cos\phi)r_1^{-\frac{d-k-1}{Q}}
\]
and using \eqref{e:remainder} and $\cos\phi\sim1$, we obtain
\begin{align*}
 &\|\cT_\rRM h\,r_1^{-\frac{d-k-1}{Q}}r_2^{-\frac{k-1}{Q}}\|_{L^q(\R_+^2)}\\
 &\qquad\lesssim
 \Big\|\frac{r_1^{-\frac{d-k-1}{Q}}}{1+r_1}\Big\|_{L^q(\R_+)}
 \|h\|_{L^s(0,\frac\pi4)},\qquad
 \frac1s=\frac{k+1}{2}-\frac{k}{q}.
\end{align*}
The radial norm is finite by \eqref{e:radial-range}--\eqref{e:radial-integral}.
Since $p,q>2$,
\[
 \frac1s-\frac1p=\frac{k+1}{2}-\frac{k}{q}-\frac1p>0.
\]
H\"older's inequality on the finite angular interval therefore gives
$\|h\|_s\lesssim\|h\|_p\lesssim\|h\phi^{-(k-1)/P}\|_p$.
\end{proof}

\begin{proof}[Completion of the proofs of Theorems~\ref{t:restricted} and~\ref{t:main}]
Sum the estimates in Propositions~\ref{p:MM}, \ref{p:RR}, \ref{p:MR}, and
\ref{p:RM} over the finitely many sign choices. The same estimates apply to the
second angular interval by \eqref{e:substitution}. This proves \eqref{e:weighted},
and hence \eqref{e:extension}, at critical points arbitrarily close to the diagonal
endpoint. All constants are uniform over the initial smooth approximations, so
density removes the auxiliary support restrictions.

For any fixed point on \eqref{e:new-segment}, choose one of the established points
closer to the diagonal endpoint and interpolate the original extension operator
between that point and the biradial Stein--Tomas estimate
$\cE:L^2(\Sp^{d-1})\to L^{q_m}(\R^d)$ from \cite{MO}. Interpolation is performed
on the invariant subspaces, or equivalently after composition with group averaging.
This proves Theorem~\ref{t:restricted}. H\"older's inequality on the sphere gives
the remaining new estimates in Theorem~\ref{t:main}. Its other cases and the
necessity of all the stated conditions follow from
\cite[Theorem~1.3 and Corollary~1.1]{MO}.
\end{proof}


\section*{Acknowledgements}

DOS and SS acknowledge the support provided during the Simons Semester
\emph{Contemporary Harmonic Analysis and its Synergies}, held in Poland from
August 1 to September 30, 2024; this event was part of the Simons Semesters at
the Banach Center, funded by the Simons Foundation (Award Number: 663281).
They thank the organizers for their kind hospitality.
SK was supported by JSPS KAKENHI grant JP24K16945.
DOS was funded by FCT/Portugal and the Recovery and Resilience Plan (PRR)
through projects UID/04459/2025 and UID/PRR/04459/2025, and by the project
2023.17881.ICDT (SHADE).
SS was supported by the Croatian Science Foundation under the project number HRZZ-IP-2022-10-5116 (FANAP).

The authors acknowledge the use of AI tools for proofreading. All mathematical ideas, arguments, and proofs in this work were developed, checked, and written by
the authors.


\appendix
\section{Axially symmetric functions}\label{s:appendix}
In this appendix we treat $k\in\{1,d-1\}$. More generally, write
$\R^d=\R^{d-1}\times\R$ and call $f$ \emph{axially symmetric} if
\[
 f(y,z)=f_0(|y|,z),\qquad (y,z)\in\R^{d-1}\times\R.
\]
Unlike $1$-biradial symmetry, axial symmetry does not require evenness in $z$.
A small spherical cap centered at a pole is compatible with axial symmetry, so
the usual Knapp example applies. Together with the constant extension datum,
it gives the classical necessary conditions
\[
 \frac1q<\frac{d-1}{2d},\qquad
 \frac{d-1}{p}+\frac{d+1}{q}\leq d-1.
\]
For $1$-biradial data, one may use the union of the two antipodal caps; see
\cite[Remark~6.1]{MO}. Stein--Tomas, interpolation with the trivial
$L^1(\Sp^{d-1})\to L^\infty(\R^d)$ extension bound, and H\"older's inequality
already give sufficiency when $q\geq2(d+1)/(d-1)$. The following theorem treats
the remaining critical segment.
\begin{theorem}\label{t:axial}
Let $d\geq3$ and suppose
\begin{equation}\label{e:axial-critical}
 \frac{d-1}{2(d+1)}<\frac1q<\frac{d-1}{2d},\qquad
 \frac{d-1}{p}+\frac{d+1}{q}=d-1.
\end{equation}
Then \eqref{e:restriction} holds for every axially symmetric Schwartz function.
\end{theorem}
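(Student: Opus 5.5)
The plan is to follow the scheme of Sections~\ref{s:reductions}--\ref{s:RM}, now with one spherical factor and one free line. Parametrize $\omega=(\omega_y\sin\phi,\cos\phi)$ with $\omega_y\in\Sp^{d-2}$ and $0<\phi<\pi$. Apply \eqref{e:FourierBessel} in $y$ only. For $x=(x',z)$ with $r=|x'|$ this gives
\[
 \cE g(x',z)=C_d\,r^{-\frac{d-2}{2}}\int_0^\pi h(\phi)\,(r\sin\phi)^{\frac12}J_{\frac{d-3}{2}}(r\sin\phi)\,e^{iz\cos\phi}\dd\phi ,
\]
where $h=g_0(\sin\phi)^{\frac{d-2}{2}}$. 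With $P,Q$ as in \eqref{e:half}, the critical line \eqref{e:axial-critical} becomes $(d-1)/P+(d+1)/Q=1$. The estimate to prove is
\[
 \big\|\cT h\,r^{-\frac{d-2}{Q}}\big\|_{L^q(\R_+\times\R)}\lesssim\big\|h(\sin\phi)^{-\frac{d-2}{P}}\big\|_{L^p(0,\pi)},
\]
where $\cT h$ denotes the angular integral above. As before, interpolation with Stein--Tomas at $q=2(d+1)/(d-1)$, via Lemma~\ref{l:interpolation} on invariant subspaces, reduces matters to critical points arbitrarily close to the diagonal endpoint $1/p=1/q=(d-1)/(2d)$.

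Split $h$ into an equatorial piece supported in $(\pi/4,3\pi/4)$ and two polar pieces. The pieces near $\phi=0$ and $\phi=\pi$ are exchanged by $z\mapsto-z$.

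\emph{Equatorial piece.} Here $\sin\phi\sim1$ and $\cos$ is nondegenerate, so the angular weight is harmless. Expand the Bessel factor by \eqref{e:Bessel}. The main term is an arc extension $\cE_\Gamma$ in the variables $(r,z)$ carrying only the weight $r^{-(d-2)/Q}$. I would interpolate \eqref{e:circle} with an $L^2$ bound of the form $\|\cE_\Gamma F\,r^{-a}\|_{L^2}\lesssim\|F\|_{L^2}$. This bound comes from Plancherel in $z$ after the substitution $u=\cos\phi$: the $z$-integral produces $\delta(\cos\phi-\cos\psi)$, hence $\phi=\psi$. The $r$-weight must then be absorbed by $L^\infty$-type or Hardy bounds in $r$, in the spirit of Lemma~\ref{l:cosine}. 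The remainder term is handled as in Proposition~\ref{p:RM}: Lemma~\ref{l:cosine} in $z$ for fixed $r$, together with finiteness of $\|r^{-(d-2)/Q}(1+r)^{-1}\|_{L^q}$. This norm is finite near the endpoint, since $(d-2)q/Q<1$ there.

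\emph{Polar piece, the main obstacle.} Near $\phi=0$ the $z$-phase $\cos\phi$ degenerates, and there is no $z$-weight available to compensate. The device of Lemma~\ref{l:weighted-L2} needs $a_1+a_2>1/2$ with both $a_j<1/2$, so it fails when the $z$-exponent is $0$. Indeed, after Plancherel in $z$ the diagonal $\phi=\psi$ kills the oscillation in $r$. This is exactly where the Knapp example lives, and the axial problem is genuinely critical there. I would therefore not use the Bessel splitting near the pole. Instead I would decompose dyadically, $\phi\sim2^{-j}$, and rescale each cap parabolically: $r\mapsto2^{-j}r$ and $z\mapsto2^{-2j}z$, removing the phase $e^{iz}$. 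This turns each piece into an axially symmetric extension from a fixed annulus $\{|\xi'|\sim1\}$ of a paraboloid-like surface, with the full Bessel function retained. On that annulus $\sin\phi\sim1$, so the equatorial argument applies uniformly in $j$.

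The critical relation should make the rescaled bounds scale-invariant: the factor $\phi^{-(d-2)/P}$ in the input norm matches the Jacobians exactly on the line $(d-1)/P+(d+1)/Q=1$. It remains to sum over $j$. Strong type at the critical line will not follow from the triangle inequality. I would first try to get almost-orthogonality in $z$-frequency ($\cos\phi\approx1-\phi^2/2$ separates the caps dyadically), which gives a square-function bound in $L^q$ via Littlewood--Paley in $z$. The remaining $\ell^2\to\ell^p$ mismatch would be handled through a Lorentz-space (Bourgain summation) interpolation between two nearby critical points, upgraded to strong type by the off-diagonal interpolation on the line. I expect this summation step to be the delicate point of the proof.
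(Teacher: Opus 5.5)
There is a genuine gap, and it begins in the equatorial piece rather than at the pole. The $L^2$ bound you propose for the main term, $\|\cE_\Gamma F\,r^{-a}\|_{L^2(\R_+\times\R)}\lesssim\|F\|_{L^2}$, is false. Since $|e^{ir\sin\varphi}|=1$, Plancherel in $z$ gives $\int_\R|\cE_\Gamma F(r,z)|^2\dd z=2\pi\int|F(\varphi)|^2(\sin\varphi)^{-1}\dd\varphi$ for every $r$. The left-hand side is therefore this constant times $\int_0^\infty r^{-2a}\dd r=\infty$ whenever $F\neq0$, and no Hardy or $L^\infty$ bound in $r$ can absorb that. The mechanism you identify at the pole (the diagonal $\varphi=\psi$ kills the oscillation in $r$) operates on the whole arc and has nothing to do with the degeneracy of $\cos\varphi$. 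It is an artifact of splitting the Bessel factor before the $L^2$ step. If you keep the full factor $(r\sin\varphi)^{1/2}J_\nu(r\sin\varphi)$, Plancherel in $z$ followed by the substitution $\tau=r\sin\varphi$ gives the exact identity
\[
 \|\cT h\,r^{-a}\|_{L^2(\R_+\times\R)}^2=2\pi\Big(\int_0^\infty\tau^{1-2a}|J_\nu(\tau)|^2\dd\tau\Big)\|h(\sin\varphi)^{a-1}\|_{L^2(0,\pi)}^2.
\]
Its constant is finite exactly when $1/2<a<(d-1)/2$. Convergence in $r$ comes from the size of $J_\nu$ at infinity and at zero, not from oscillation. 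This is Lemma~\ref{l:axial-L2}, and it holds on all of $(0,\pi)$, poles included.

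This identity is also the idea missing from your polar analysis. Its input weight $(\sin\varphi)^{a-1}$ is singular at the poles and supplies exactly the compensation you say is unavailable; it is an angular weight, not a $z$-weight. Interpolate it by Lemma~\ref{l:interpolation} with the unweighted bound $\|\cT h\|_{L^t}\lesssim\|h\|_{L^s}$, $1/s+3/t=1$, $4<t<\infty$. That bound comes from semicircle restriction for the main terms, and from Hausdorff--Young in $z$, Lorentz--H\"older and real interpolation for the remainder. Choosing $u=1/P$, $v=1/Q$, $\theta=(d-2)(u+v)$, $a=v/(u+v)\in(1/2,1)$ and $t=2+4a$ produces the output weight $r^{-\theta a}=r^{-(d-2)/Q}$ and the input weight $(\sin\varphi)^{-\theta(1-a)}=(\sin\varphi)^{-(d-2)/P}$. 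The critical relation $(d-1)u+(d+1)v=1$ is precisely the compatibility condition between the interpolation parameter and the weight exponents. This settles the whole critical segment at once, with no localization, no dyadic decomposition, and no summation.

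Your polar argument, by contrast, remains a program. It feeds the rescaled annuli into the broken equatorial estimate. The Littlewood--Paley and Minkowski step leaves $(\sum_j\|h_j(\sin\varphi)^{-(d-2)/P}\|_p^2)^{1/2}$, which the $\ell^p$ sum does not control since $p>2$. Finally, Bourgain's summation trick would need uniform off-critical bounds for the rescaled pieces on both sides of the line, which you have not established.
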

Shao \cite[Corollary~2.3]{Shao09} established the restriction conjecture for
cylindrically symmetric data on the paraboloid. His corresponding spherical
estimates are localized near a pole, specifically to the lower third of the
sphere \cite[Remark~2.8 and Section~5]{Shao09}. We give a direct proof on the
whole sphere, based on Plancherel's identity and weighted interpolation.

\subsection{Reduction to a weighted estimate}
Parametrize the sphere by
\[
 \omega=(\vartheta\sin\varphi,\cos\varphi),\qquad
 0<\varphi<\pi,\quad \vartheta\in\Sp^{d-2},
\]
so that $\dd\sigma(\omega)=(\sin\varphi)^{d-2}\dd\varphi\dd\sigma(\vartheta)$.
For axially symmetric $g$, write $g(\omega)=g_0(\varphi)$ and set
\[
 \nu=\frac{d-3}{2},\qquad h(\varphi)=g_0(\varphi)(\sin\varphi)^{\frac{d-2}{2}}.
\]
Define
\[
 \cT h(r,z):=\int_0^\pi h(\varphi)(r\sin\varphi)^{\frac12}
 J_\nu(r\sin\varphi)e^{iz\cos\varphi}\dd\varphi,\qquad r>0,\quad z\in\R.
\]
By \eqref{e:FourierBessel},
$\cE g(y,z)=c_d|y|^{-(d-2)/2}\cT h(|y|,z)$. As in Section~\ref{s:reductions}, the
extension estimate is equivalent to
\begin{equation}\label{e:axial-weighted}
 \|\cT h\,r^{-\frac{d-2}{Q}}\|_{L^q(\R_+\times\R)}
 \lesssim\|h(\sin\varphi)^{-\frac{d-2}{P}}\|_{L^p(0,\pi)},
\end{equation}
where $P,Q$ are given by \eqref{e:half}. The critical relation becomes
\[
 \frac{d-1}{P}+\frac{d+1}{Q}=1.
\]

\subsection{Proof of Theorem~\ref{t:axial}}
The axial setting involves only one Bessel factor, and Plancherel's theorem
applies directly in the $z$-variable. We interpolate the next two estimates.
\begin{lemma}\label{l:axial-L2}
For $1/2<a<(d-1)/2$,
\begin{equation}\label{e:axial-L2}
 \|\cT h\,r^{-a}\|_{L^2(\R_+\times\R)}
 \lesssim\|h(\sin\varphi)^{a-1}\|_{L^2(0,\pi)}.
\end{equation}
\end{lemma}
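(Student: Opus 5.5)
The estimate should in fact be an identity up to a constant: apply Plancherel in $z$ for each fixed $r$, then integrate the resulting radial weight explicitly. Write $B(t):=t^{1/2}J_\nu(t)$, so that
\[
 \cT h(r,z)=\int_0^\pi h(\varphi)B(r\sin\varphi)e^{iz\cos\varphi}\dd\varphi .
\]
First take $h$ smooth and compactly supported in $(0,\pi)$, and pass to general $h$ by density at the end.

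\emph{Step 1 (Plancherel in $z$).} Substitute $u=\cos\varphi$, so $\dd\varphi=\dd u/\sin\varphi$ with $\sin\varphi=(1-u^2)^{1/2}$. For fixed $r>0$, the function $z\mapsto\cT h(r,z)$ is then the inverse Fourier transform of
\[
 u\mapsto \mathbf 1_{(-1,1)}(u)\,\frac{h(\varphi(u))B(r\sin\varphi(u))}{\sin\varphi(u)} .
\]
Plancherel in $z\in\R$ therefore gives
\[
 \int_\R|\cT h(r,z)|^2\dd z
 =2\pi\int_{-1}^1\frac{|h(\varphi)|^2|B(r\sin\varphi)|^2}{\sin^2\varphi}\dd u
 =2\pi\int_0^\pi\frac{|h(\varphi)|^2\,|B(r\sin\varphi)|^2}{\sin\varphi}\dd\varphi .
\]

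\emph{Step 2 (radial integration).} Multiply by $r^{-2a}$, integrate over $r\in\R_+$, and use Tonelli. For fixed $\varphi$, set $t=r\sin\varphi$. This gives
\[
 \int_0^\infty r^{-2a}|B(r\sin\varphi)|^2\dd r=(\sin\varphi)^{2a-1}\int_0^\infty t^{1-2a}J_\nu(t)^2\dd t=:c_{a}(\sin\varphi)^{2a-1}.
\]
It remains to check that $c_a<\infty$. Near $t=0$ we have $J_\nu(t)^2\sim t^{2\nu}=t^{d-3}$, so the integrand is $\sim t^{d-2-2a}$; this is integrable exactly when $a<(d-1)/2$. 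For $t\geq1$ we have $tJ_\nu(t)^2\lesssim1$, so the integrand is $\lesssim t^{-2a}$; this is integrable exactly when $a>1/2$. Thus the hypothesis $1/2<a<(d-1)/2$ is precisely what is needed. Combining the two steps,
\[
 \|\cT h\,r^{-a}\|_{L^2(\R_+\times\R)}^2=2\pi c_a\int_0^\pi|h(\varphi)|^2(\sin\varphi)^{2a-2}\dd\varphi ,
\]
which is \eqref{e:axial-L2}, with equality up to the constant.

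\emph{Step 3 (density).} Since the bound is an identity with a finite constant, it extends from the dense class to all $h$ with $h(\sin\varphi)^{a-1}\in L^2$. No step is a real obstacle. The only points needing care are the justification of Plancherel, which holds because the transformed datum lies in $L^2(-1,1)$ for nice $h$, and the two-sided convergence of the Bessel moment $c_a$, which is exactly where the range of $a$ enters.
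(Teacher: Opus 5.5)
Your proposal is correct and is essentially the paper's proof: Plancherel in $z$ for each fixed $r$, then the substitution $\tau=r\sin\varphi$, which turns the estimate into an exact identity with constant $2\pi\int_0^\infty \tau^{1-2a}J_\nu(\tau)^2\,d\tau$. As in the paper, this constant is finite at $0$ because $a<(d-1)/2$ and at $\infty$ because $a>1/2$.
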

\begin{proof}
For every $r>0$, Plancherel's identity in $z$ gives
\[
 \int_\R|\cT h(r,z)|^2\dd z
 =2\pi\int_0^\pi |h(\varphi)(r\sin\varphi)^{\frac12}
 J_\nu(r\sin\varphi)|^2\frac{\dd\varphi}{\sin\varphi}.
\]
Integrate against $r^{-2a}\dd r$ and substitute $\tau=r\sin\varphi$ to obtain
\[
 \|\cT h\,r^{-a}\|_2^2
 =
 2\pi\Big(\int_0^\infty|J_\nu(\tau)\tau^{\frac12-a}|^2\dd\tau\Big)
 \|h(\sin\varphi)^{a-1}\|_2^2.
\]
The Bessel integral is finite: near zero its integrand is
$O(\tau^{d-2-2a})$, and at infinity it is $O(\tau^{-2a})$.
The stated range of $a$ ensures both integrability conditions.
\end{proof}

\begin{lemma}\label{l:axial-unweighted}
Let $4<t<\infty$ and $1/s+3/t=1$. Then
\begin{equation}\label{e:axial-unweighted}
 \|\cT h\|_{L^t(\R_+\times\R)}\lesssim\|h\|_{L^s(0,\pi)}.
\end{equation}
\end{lemma}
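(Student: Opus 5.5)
The plan is to mimic Section~\ref{s:reductions}: split the Bessel factor and treat the two resulting pieces separately. By \eqref{e:Bessel}--\eqref{e:remainder} with $\nu=(d-3)/2>-1/2$, write $(r\sin\varphi)^{1/2}J_\nu(r\sin\varphi)=M_\nu(r\sin\varphi)+R_\nu(r\sin\varphi)$. This splits $\cT h=\cT_{\rM}h+\cT_{\rR}h$, where
\[
 \cT_{\rM}h(r,z)=\sum_\pm c_{\nu,\pm}\int_0^\pi h(\varphi)e^{i(\pm r\sin\varphi+z\cos\varphi)}\dd\varphi .
\]
The plan has two steps: an extension estimate from the circle for the main part, and a one-dimensional oscillatory bound in $z$ combined with the decay of $R_\nu$ in $r$ for the remainder.

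\emph{Main term.} Each sign term is, up to the reflection $(r,z)\mapsto(\pm r,z)$, the extension operator of the unit circle applied to the datum $h(\varphi)$ on the arc $\{(\sin\varphi,\cos\varphi):0<\varphi<\pi\}$, evaluated at points of a half-plane. Split $(0,\pi)$ into finitely many arcs of length at most $\pi/4$ and rotate each arc onto $\Gamma$. Then the circle restriction theorem \eqref{e:circle}, which holds on all of $\R^2$, gives
\[
 \|\cT_{\rM}h\|_{L^t(\R_+\times\R)}\lesssim\|h\|_{L^s(0,\pi)},\qquad \frac1s+\frac3t=1,\quad 4<t<\infty .
\]
This step is routine.

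\emph{Remainder term.} Fix $r$ and set $F_r(\varphi)=h(\varphi)R_\nu(r\sin\varphi)$. The $z$-integral $\int F_r(\varphi)e^{iz\cos\varphi}\dd\varphi$ is, after the substitution $u=\cos\varphi$, the Fourier transform of $F_r/\sin\varphi$ in $u$. Hausdorff--Young in $z$ therefore bounds its $L^t_z$ norm by
\[
 \Big(\int|F_r|^{t'}(\sin\varphi)^{1-t'}\dd\varphi\Big)^{1/t'} .
\]
Next take the $L^t_r$ norm and use Minkowski's inequality, which is allowed since $t\ge t'$. Together with \eqref{e:remainder} and $\|(1+r\sin\varphi)^{-1}\|_{L^t_r}\sim(\sin\varphi)^{-1/t}$, this reduces matters to the weighted estimate
\[
 \int_0^\pi|h|^{t'}(\sin\varphi)^{-\frac2{t-1}}\dd\varphi\lesssim\|h\|_s^{t'} .
\]

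\emph{Main obstacle.} A direct application of H\"older's inequality with $1/t'-1/s=2/t$ leads to the weight $(\sin\varphi)^{-1}$, so the last estimate fails by a logarithm. I expect this endpoint to be the hardest step, and I would try two remedies in order.
\begin{itemize}
\item[(a)] Replace Hausdorff--Young by a $TT^*$ argument in $z$ for fixed $r$. The regularized kernel $\int e^{iz(\cos\varphi-\cos\psi)}\dd z$ with the $(1+r\sin\varphi)^{-1}(1+r\sin\psi)^{-1}$ factors integrated in $r$ gives a bound of the form $(\sin\varphi\sin\psi)^{-1/2}\min\{\sin\varphi,\sin\psi\}^{-1}\cdots$. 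Lemma~\ref{l:wHLS}, used off the diagonal, then avoids the loss coming from Minkowski's inequality.
\item[(b)] Decompose $h$ dyadically, $\sin\varphi\sim2^{-j}$. Each piece is bounded uniformly by the computation above. To sum the pieces, I would exploit that the remainder contribution of the $j$-th piece is essentially concentrated in $r\lesssim2^j$ and decays like $(2^{-j}r)^{-1}$ beyond that. This yields geometric off-diagonal decay between pieces in the mixed $L^t$ norm, so the sum is controlled by $\ell^t$ of $\|h_j\|_s$, which is at most $\|h\|_s$ since $s\le t$.
\end{itemize}
Combining the main and remainder bounds proves \eqref{e:axial-unweighted}.
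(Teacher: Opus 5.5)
Your decomposition, your main-term argument, and your reduction of the remainder coincide with the paper's. That reduction uses Hausdorff--Young in $z$ after $\tau=\cos\varphi$, Minkowski in $r$ (valid since $t>t'$), and $\|R_\nu(r\sin\varphi)\|_{L^t_r}\lesssim(\sin\varphi)^{-1/t}$. You also correctly diagnose that plain H\"older fails by a logarithm. The two routes diverge only at the upgrade to strong type.

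The paper notes that $(\sin\varphi)^{-2/t}\in L^{t/2,\infty}(0,\pi)$, so Lorentz--H\"older gives $\cT_\rR:L^{s,1}\to L^t$ for every $t\in(4,\infty)$. Real interpolation between two such bounds at $t_0<t<t_1$ then gives $L^s=L^{s,s}\to L^{t,s}\subset L^t$, using $s<t$.

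Your remedy (b) is a correct, hands-on version of the same mechanism. It avoids Lorentz spaces and interpolation theory at the price of some bookkeeping. To make it a proof, state the block estimate explicitly: if $h_j$ is the restriction of $h$ to $\{\sin\varphi\sim2^{-j}\}$, then
\[
 \|\cT_\rR h_j\|_{L^t(\{r\sim2^l\}\times\R)}\lesssim2^{-|j-l|/t}\|h_j\|_{L^s(0,\pi)}.
\]
For $l>j$ this follows from $|R_\nu(r\sin\varphi)|\lesssim2^{j-l}$. For $l\leq j$ the decay does \emph{not} come from $R_\nu$, which need not vanish at the origin. It comes from the uniform-in-$r$ bound $\|\cT_\rR h_j(r,\cdot)\|_{L^t_z}\lesssim2^{j/t}\|h_j\|_{t'}\lesssim2^{-j/t}\|h_j\|_s$, obtained from Hausdorff--Young and then H\"older on a set of measure $\sim2^{-j}$. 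Integrating this over a shell of length $\sim2^l$ gives the factor $2^{-(j-l)/t}$. Young's inequality $\ell^1*\ell^t\subset\ell^t$ in $l$ and the embedding $\ell^s\subset\ell^t$ in $j$ then finish the argument, as you indicate.

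You should drop remedy (a). Expanding the $L^2_z$ norm and integrating in $z$ produces (a regularization of) $2\pi\delta(\cos\varphi-\cos\psi)$, which is just Plancherel, not the off-diagonal kernel you describe. In any case, an $L^2$ computation does not by itself give an $L^s\to L^t$ bound with $t>4$.
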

\begin{proof}
Decompose the Bessel factor using \eqref{e:Bessel}. The two main terms are
constant multiples of
\[
 \cT_\pm h(r,z)=\int_0^\pi h(\varphi)e^{i(\pm r\sin\varphi+z\cos\varphi)}\dd\varphi.
\]
They are semicircle extension operators, so the classical planar restriction
theorem gives \eqref{e:axial-unweighted} for both. It remains to prove
\begin{equation}\label{e:axial-remainder}
 \|\cT_\rR h\|_{L^t(\R_+\times\R)}\lesssim\|h\|_{L^s(0,\pi)},
\end{equation}
where
\[
 \cT_\rR h(r,z):=\int_0^\pi h(\varphi)R_\nu(r\sin\varphi)e^{iz\cos\varphi}\dd\varphi.
\]
With $\tau=\cos\varphi$, this is
\[
 \cT_\rR h(r,z)=\int_{-1}^1H(\tau,r)e^{iz\tau}\dd\tau,\qquad
 H(\tau,r)=\frac{h(\arccos\tau)R_\nu(r\sqrt{1-\tau^2})}{\sqrt{1-\tau^2}}.
\]
Hausdorff--Young in $z$, Minkowski's inequality, and $t>t'$ imply
\begin{align*}
 \|\cT_\rR h\|_{L^t_{r,z}}
 \lesssim\big\|\|H(\cdot,r)\|_{L^{t'}_\tau}\big\|_{L^t_r}
 \leq\big\|\|H(\tau,\cdot)\|_{L^t_r}\big\|_{L^{t'}_\tau}
 \lesssim\|h(\varphi)(\sin\varphi)^{-\frac2t}\|_{L^{t'}(0,\pi)}.
\end{align*}
In the last step we used
$\|R_\nu(r\sin\varphi)\|_{L^t_r(\R_+)}\lesssim(\sin\varphi)^{-1/t}$,
which follows from \eqref{e:remainder}. Now
\[
 \frac1{t'}-\frac1s=\frac2t,\qquad
 (\sin\varphi)^{-\frac2t}
 \in 
 L^{\frac t2,\infty}(0,\pi).
\]
Lorentz--H\"older therefore gives $\cT_\rR:L^{s,1}(0,\pi)\to L^t(\R_+\times\R)$;
see \cite[Section~1.4]{Grafakos}.

For the strong estimate, choose $4<t_0<t<t_1<\infty$, with
$1/s_j+3/t_j=1$. Real interpolation between the corresponding bounds
$\cT_\rR:L^{s_j,1}\to L^{t_j}$ gives
$\cT_\rR:L^{s,s}\to L^{t,s}$; see \cite[Section~1.4.4]{Grafakos}.
Since $L^{s,s}=L^s$ and $s<t$, the embedding $L^{t,s}\subset L^t$ proves
\eqref{e:axial-remainder}.
\end{proof}

\begin{proof}[Completion of the proof of Theorem~\ref{t:axial}]
Put $u=1/P$ and $v=1/Q$. On \eqref{e:axial-critical},
\[
 (d-1)u+(d+1)v=1,\qquad 0<u<v.
\]
Choose
\[
 \theta=(d-2)(u+v),\qquad a=\frac{v}{u+v},\qquad
 t=2+4a,\qquad \frac1s=1-\frac3t.
\]
Then $0<\theta<1$, $1/2<a<1\leq(d-1)/2$, and $4<t<6$. Moreover,
\[
 \left(\frac1p,\frac1q\right)
 =(1-\theta)\left(\frac1s,\frac1t\right)+\theta\left(\frac12,\frac12\right),
 \qquad
 \theta a=\frac{d-2}{Q},\quad \theta(1-a)=\frac{d-2}{P}.
\]
Lemma~\ref{l:interpolation} applied to Lemmas~\ref{l:axial-L2} and
\ref{l:axial-unweighted} therefore yields \eqref{e:axial-weighted} throughout the
critical segment. Density and symmetry-restricted duality complete the proof.
Together with the estimates preceding Theorem~\ref{t:axial} and H\"older's
inequality on the sphere, this gives the full classical strong-type range in the
axially symmetric class.
\end{proof}

\end{document}